\newcolumntype{Y}{>{\centering\arraybackslash}X}
\newcolumntype{Z}{>{\scriptsize}Y}
\numberwithin{equation}{section}
\theoremstyle{definition}
\newtheorem{theorem}{Theorem}[section]
\newtheorem{corollary}[theorem]{Corollary}
\newtheorem{proposition}[theorem]{Proposition}
\newtheorem{definition}[theorem]{Definition}
\newtheorem{example}[theorem]{Example}
\newtheorem{notation}[theorem]{Notation}
\newtheorem{remark}[theorem]{Remark}
\newtheorem{lemma}[theorem]{Lemma}
\newtheorem{claim}{Claim}
\newcommand*{\myproofname}{Proof}
\newenvironment{clproof}[1][\myproofname]{\begin{proof}[#1]}{\end{proof}}
\setlist[enumerate]{labelsep=0.3pc, leftmargin=1.3pc,label=(\alph*),itemsep=.6ex,topsep=0.2ex}
\setlist[enumerate,2]{labelsep=0.3pc, leftmargin=1.6pc,label=(\alph*),itemsep=.6ex,topsep=0.2ex}
\newcommand\qbin[3]{\left[\begin{matrix} #1 \\ #2 \end{matrix} \right]_{#3}}
\newcommand{\rowsp}{\textnormal{rowsp}}
\newcommand{\Fcol}[3]{\mbox{$\F^{{#1}\times{#2}}({#3},{\rm c})$}}
\newcommand{\Frow}[3]{\mbox{$\F^{{#1}\times{#2}}({#3},{\rm r})$}}
\newcommand{\numberset}{\mathbb}
\newcommand{\N}{\numberset{N}}
\newcommand{\R}{\numberset{R}}
\newcommand{\F}{\numberset{F}}
\newcommand{\ini}{\textnormal{ini}}
\newcommand{\mU}{\mathcal{U}}
\newcommand{\mC}{\mathcal{C}}
\newcommand{\mA}{\mathcal{A}}
\newcommand{\mI}{\mathcal{I}}
\newcommand{\rk}{\textnormal{rk}}
\newcommand{\srk}{\textnormal{srk}}
\newcommand{\maxsrk}{\textnormal{maxsrk}}
\newcommand{\maxrk}{\textnormal{maxrk}}
\newcommand{\mB}{\mathcal{B}}
\newcommand{\mP}{\mathcal{P}}
\newcommand{\subspace}[1]{\mbox{$\langle{#1}\rangle$}}
\newcommand{\mediumoplus}{\ensuremath{\vcenter{\hbox{\scalebox{1.5}{$\oplus$}}}}}
\newcommand{\Gaussian}[2]{\genfrac{[}{]}{0pt}{1}{#1}{#2}}
\providecommand{\keywords}[1]
{
  \small	
  \textbf{\textit{Keywords---}} #1
}
\providecommand{\msc}[1]
{
  \small	
  \textbf{\textit{MSC---}} #1
}
\newcolumntype{x}[1]{>{\centering\arraybackslash\hspace{0pt}}p{#1}}
\newcolumntype{y}[1]{>{\centering\arraybackslash\hspace{0pt}}m{#1}}
\title{\textbf{Anticodes in the Sum-Rank Metric}}
\author[1]{Eimear Byrne}
\author[2]{Heide Gluesing-Luerssen\thanks{H. Gluesing-Luerssen was partially supported by the grant \#422479 from the Simons Foundation.}}
\author[3]{Alberto Ravagnani}
\affil[1]{School of Mathematics and Statistics, University College Dublin, Ireland; ebyrne@ucd.ie}
\affil[2]{Department of Mathematics, University of Kentucky, USA; heide.gl@uky.edu}
\affil[3]{Department of Mathematics and Computer Science, Eindhoven University of Technology, The Netherlands; a.ravagnani@tue.nl}
\date{}
\begin{document}

\maketitle

\begin{abstract}\label{sec:Abstract}
We study the structure of anticodes in the sum-rank metric for arbitrary fields and matrix blocks of arbitrary sizes.
Our main result is a complete classification of optimal linear anticodes. 
We also compare the cardinality of the ball in the sum-rank metric with that of an optimal linear anticode, showing that the latter is strictly larger over sufficiently large finite fields. Finally, we give examples of parameters for which the largest anticode is neither a ball nor a 
linear anticode.
\end{abstract}

\keywords{matrix codes, sum-rank metric, anticode}\\

\msc{ 11T71, 15A30,15A99}

\bigskip

\section{Introduction}

\bigskip

Codes for the sum-rank metric have been proposed in several communication applications (cf.  \cite{MPK19}, \cite{MPK19a}, \cite{NoUF10}, \cite{ShKsch20}). The question of code optimality in the sum-rank has been considered in \cite{us_srk,MP20} and the references therein.

In this note we will study anticodes in the sum-rank metric. More precisely, we consider the space
$\bigoplus_{i=1}^t \F^{n_i \times m_i}$ with an arbitrary field~$\F$, and where the {\em sum-rank} of an element $(X_1,...,X_t)$ 
is the sum of the ranks of the matrices $X_i$. 
The resulting distance function generalizes both the Hamming metric ($n_i=m_i=1$) and the rank metric ($t=1$). 
This hybrid nature is reflected in the classification result of optimal linear anticodes presented later in this note.

In a metric space, an $r$-\textit{anticode} is a set of elements whose pairwise distances are upper bounded by~$r$.
Anticodes naturally arise in the theory of error-correcting codes in two contexts:  
the {\em Code-Anticode Bound} ~\cite{delsarte,ahlswede+} and for \textit{generalized weights}~\cite{ravagnani2016generalized}. 
In both of these topics, optimal anticodes play an important role. 

The Code-Anticode Bound states that a code $C$ of minimum distance $d$ and a $(d-1)$-anticode~$A$, both contained in a suitable 
ambient space $V$, satisfy $|C||A| \leq |V|$. 
This gives an upper bound on $|C|$, and the larger the cardinality of~$A$, the tighter the bound is. Nonlinear anticodes for the Hamming metric were studied in a number of papers (cf. \cite{ahlswedekhachatrian,ahlswede+}), wherein the authors describe codes meeting the Code-Anticode Bound in the case of an optimal anticode as being $d$-{\em diameter perfect}. 
Moreover, the authors establish the size of an optimal anticode as a consequence of their Diametric Theorem.

In \cite{ravagnani2016generalized}, the generalized Hamming and rank weights of a linear code~$C$ are defined in terms 
of the dimensions of intersections of $C$ with optimal linear anticodes. 
This approach leads to duality results for generalized weights~\cite{ravagnani2016generalized} and can be utilized to 
characterize different classes of extremal codes~\cite{mrgiuseppe}.    

A natural question about anticodes is the classification of the optimal linear ones. 
An optimal linear $r$-anticode over a field $\F$ is a linear $r$-anticode whose dimension is maximal over all linear $r$-anticodes over~$\F$.
In the Hamming metric, such linear spaces are isomorphic to $\bigoplus_{i=1}^r \F$ (unless $|\F|=2$); see \cite{ravagnani2016generalized}. 
In the rank metric on $\F^{n \times m}$, the optimal $r$-anticodes are the matrix spaces whose members have column 
spaces contained in a fixed $r$-dimensional subspace $U\subseteq \F^n$ if $n<m$, while for $n=m$ their transposed spaces are also optimal~\cite{Mes85}.

After the preliminaries, we provide in \cref{S-OptLinAnti} a classification of optimal linear anticodes in the sum-rank metric. 
We show that any optimal linear anticode is the direct product of (a) optimal 
anticodes in the rank metric (a certain number of which are necessarily full matrix spaces) and (b) an optimal anticode in the Hamming metric.
Conversely, we also characterize which of these products are indeed optimal linear anticodes.
For the precise formulation see \cref{C-OptAnti}.
    
In \cref{sec:non} we turn to nonlinear anticodes over finite fields and the Code-Anticode Bound.
The latter generalizes the Sphere-Packing Bound by replacing the sphere of radius~$r$ with an optimal $2r$-anticode. 
For example, in the Hamming metric the Singleton bound is sharper than the Hamming bound when the size of 
the sphere of radius~$r$ is exceeded by that of an optimal linear $2r$-anticode (which is the case for sufficiently large field size). 
In Section \ref{sec:non}, we consider this question for sum-rank metric codes and show that, for sufficiently 
large field size, the sum-rank sphere of radius~$r$ is smaller than an optimal linear $2r$-anticode.  

We conclude the paper by providing an example where the largest (possibly non-linear) anticodes in the sum-rank metric are, in general, neither the spheres nor the optimal linear anticodes.

\section{Sum-Rank Metric Anticodes}

Throughout the paper, $\F$ denotes an arbitrary field. For $i \in \N_0=\{0,1,2,\ldots\}$ we define $[i]=\{j \in \N_0 \mid 1 \le j \le i\}$. 
In order to define the ambient space for our sum-rank metric codes we fix positive integers $t$, $n_1, \ldots, n_t$, and $m_1,\ldots,m_t$ that satisfy 
\begin{equation}\label{e-nimi}
     n_i\leq m_i\ \text{ for all }\ i\in[t]\ \text{ and }\ m_1 \ge \cdots \ge m_t.
\end{equation}
We also set 
\begin{equation}\label{e-NM}
    N:=n_1+ \ldots +n_t\ \text{ and }\ M:=m_1+ \ldots +m_t.
\end{equation}
The \textbf{sum-rank metric space} is the product of the $t$ matrix spaces $\F^{n_i \times m_i}$, that is, 
\begin{equation}\label{e-Pi}
 \Pi:=  \Pi(n_1\times m_1,\ldots, n_t\times m_t):=\bigoplus_{i=1}^t \F^{n_i \times m_i}.
\end{equation}
The \textbf{sum-rank} of an element $X=(X_1,...,X_t) \in \Pi$  is
\[
   \srk(X):=\sum_{i=1}^t \rk(X_i).
\]
Note that $\srk$ induces a metric on~$\Pi$ via $(X,Y)\longmapsto \srk(X-Y)$.
In this paper we study anticodes in the sum-rank metric, which are defined as follows.

\begin{definition}
Let $0 \le r \le N$ be an integer. A (\textbf{sum-rank metric}) \textbf{$r$-anticode} is a non-empty subset $A \subseteq \Pi$ such that
$\srk(X-Y) \le r$ for all $X,Y \in A$. We say that $A$ is \textbf{linear} if it is an $\F$-linear subspace of $\Pi$. In that case we write $A \le \Pi$.
\end{definition}

By definition, $A \subseteq \Pi$ is a $0$-anticode if and only if $|A|=1$.
Sum-rank metric anticodes may also be regarded as sets of $(N \times M)$-matrices supported on a particular profile.
More precisely, for any subset $\mP \subseteq [N] \times [M]$, where $N,\,M$ are as in~\eqref{e-NM}, define $\F^{N \times M}[\mP]$ as the space of $(N \times M)$-matrices supported on $\mP$ (i.e., whose nonzero entries all have indices in~$\mP$).
If we define now $n_0=m_0=0$ and
\begin{equation}\label{e-calP}
  N_i=\sum_{j=0}^i n_j,\ M_i=\sum_{j=0}^i m_j,\ \mP^i=[N_{i-1}+1,N_i]\times [M_{i-1}+1,M_i],\ \text{ and }
  \mP=\bigcup_{i=1}^t\mP^i,
\end{equation}
then $\F^{N\times M}[\mP]$ is the space of $(N\times M)$-block-diagonal matrices with $(n_i\times m_i)$-blocks on the diagonal; 
see also Figure~\ref{f1} in Section~\ref{S-OptLinAnti}.
This provides us with an $\F$-linear isomorphism
\begin{equation}\label{e-Psi}
  \psi:\Pi\longrightarrow\F^{N \times M}[\mP],\quad
   (X_1,\ldots,X_t)\longmapsto \begin{pmatrix}X_1 & & \\ &\ddots& \\ & &X_t\end{pmatrix},
\end{equation}
that satisfies $\rk(\psi(X))=\srk(X)$ for all $X\in\Pi$.
Thus~$\psi$ is a linear isometry between the metric spaces $(\Pi,\srk)$ and $(\F^{N \times M}[\mP],\rk)$.
This isometry will be very useful in the next section when characterizing optimal anticodes.

Our first result is an upper bound on the dimension, and thus cardinality, of a \emph{linear} anticode. 
In Section~\ref{sec:non} we will see that this bound is not true for anticodes that are not necessarily linear.

\begin{theorem}\label{T-OptAnti}
Let $0 \le r \le N$ be an integer and set
\[
      \mU_r=\bigg\{(u_1,\ldots,u_t)\in\N_0^t\,\bigg|\, 0\leq u_i\leq n_i \mbox{ for all $i$ and }\sum_{i=1}^t u_i=r\bigg\}.
\]
Then any  linear $r$-anticode $C \le \Pi$ satisfies
\[
             \dim(C) \le \max\bigg\{ \sum_{i=1}^t m_iu_i \,\bigg|\, (u_1,...,u_t) \in \mU_r \bigg\}.
\]
\end{theorem}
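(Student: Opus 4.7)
The plan is to reduce the problem to the pure rank-metric case, where the bound is Meshulam's classical theorem, and to patch the per-block estimates together using the sum-rank budget.

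First I would transport the problem through the isometry $\psi : \Pi \to \F^{N\times M}[\mP]$, so that $C$ becomes a block-diagonal subspace of $\F^{N \times M}$ whose elements all have matrix rank at most $r$. For each $i \in [t]$ let $C_i := \pi_i(C) \le \F^{n_i \times m_i}$ denote the projection onto the $i$-th block, and set $r_i := \max_{A \in C_i} \rk(A)$; trivially $r_i \le n_i$. Since $C_i$ is a linear rank-metric $r_i$-anticode in a single matrix space, the classical Meshulam/Flanders bound gives $\dim C_i \le r_i m_i$, and the diagonal inclusion $C \hookrightarrow \bigoplus_i C_i$ yields
\[
   \dim_{\F} C \;\le\; \sum_{i=1}^t r_i m_i.
\]

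To finish, it would suffice to show $\sum_i r_i \le r$: because $N \ge r$, the tuple $(r_1, \ldots, r_t)$ could then be extended to some $\vec u \in \mU_r$ with $u_i \ge r_i$, giving $\sum r_i m_i \le \sum u_i m_i \le \max_{\mU_r} \sum u_i m_i$. To establish $\sum_i r_i \le r$, pick for each $i$ a witness $X^{(i)} \in C$ with $\rk(X^{(i)}_i) = r_i$, and look for $X = \sum_i \alpha_i X^{(i)} \in C$ realising $\rk(X_i) = r_i$ for every $i$ simultaneously. Evaluating a fixed non-vanishing $r_i\times r_i$ minor of $X^{(i)}_i$ on $X_i$ turns this into the non-vanishing of $t$ non-zero polynomials in $(\alpha_1, \ldots, \alpha_t)$; over an infinite (or sufficiently large) field a Zariski-density argument produces such $X$ and forces $\sum r_i = \srk(X) \le r$.

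The hard part will be the small-finite-field regime, where this density step genuinely fails. One can already exhibit over $\F_2$ linear $r$-anticodes with $\sum_i r_i > r$ --- for instance, in $\F_2^{2\times 2} \oplus \F_2^{2\times 2}$ the span of $(I_2,\, e_1 e_1\T)$ and $(e_1 e_1\T,\, I_2)$ is a $3$-anticode with $r_1 = r_2 = 2$ --- so the blockwise bound $\sum r_i m_i$ can strictly exceed $\max_{\mU_r} \sum u_i m_i$ and is simply not enough. In this regime a purely blockwise projection cannot succeed, and the argument must exploit the block-diagonal structure of $\psi(C)$ more delicately. The natural route is a Flanders-type analysis anchored at a reference $A \in C$ of maximal sum-rank, brought block by block into the canonical form $A_i = I_{s_i} \oplus 0$ with $\sum s_i \le r$, from which one extracts linear and bilinear constraints (of the shape $X^{22}_i = 0$ and $X^{21}_i Y^{12}_i + Y^{21}_i X^{12}_i = 0$) on the off-diagonal parts of every $X,Y \in C$, combining them across blocks to upgrade the blockwise bound to $\dim C \le \sum_i s_i m_i$; since $(s_1, \ldots, s_t)$ extends to an element of $\mU_r$, this closes the proof.
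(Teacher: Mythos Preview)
Your large-field argument is fine, and you correctly identify that the blockwise projection bound $\sum r_i m_i$ can genuinely overshoot over small fields (your $\F_2$ example is apt). But the small-field patch is a real gap: you only \emph{name} a Flanders-type analysis without carrying it out. The constraints you list ($X^{22}_i=0$ and the bilinear relations) do not by themselves yield $\dim C \le \sum s_i m_i$; in the single-block case this step already requires Meshulam's line-covering argument rather than the Schur-complement manipulations you allude to, and in the multi-block case you would effectively be re-deriving the Ferrers-diagram bound from scratch. As written, the proposal is a correct proof over infinite or large enough fields and an unfinished sketch otherwise.

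The paper's proof is much shorter and takes a different route that sidesteps the field-size issue entirely. After applying the isometry $\psi$, the space $\psi(C)$ sits inside $\F^{N\times M}[\mP]$, the space of matrices supported on the block-diagonal pattern $\mP$, and every element has rank at most $r$. The paper then simply invokes \cite[Theorem~46]{GoRa17}, a result on subspaces of matrices supported on a prescribed profile (Ferrers shape) with bounded rank; specialised to the staircase pattern $\mP$ with $n_i\le m_i$, that theorem gives exactly $\dim C \le \max_{u\in\mU_r}\sum_i m_i u_i$, valid over any field. The underlying mechanism is a Meshulam-type line-covering argument (the same idea as Theorem~\ref{T-M} later in the paper), which is why no density or Schur-complement step is needed. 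So rather than projecting blockwise and reassembling, the paper treats the whole block-diagonal picture as a single rank-metric anticode with constrained support and quotes the appropriate generalisation of Meshulam's theorem.
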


\begin{proof}
By \eqref{e-Psi} the space $\psi(C)$ is a linear $r$-anticode in $\F^{N \times M}[\mP]$ with respect to the rank metric. Moreover,
$C$ and $\psi(C)$ have the same cardinality.
 Since $n_i \le m_i$ for all $i \in [t]$, we obtain from \cite[Theorem~46]{GoRa17} that this dimension cannot exceed
 the stated maximum.
\end{proof}

\begin{definition}
We call a linear anticode \textbf{optimal} if it attains the bound in \cref{T-OptAnti} with equality.
\end{definition}

We wish to point out that in \cite{GorlaEtAl20} a weaker notion of anticode optimality is introduced.
The authors also describe explicitly their optimal anticodes. 
The result is similar in nature to ours in the next section.

Clearly, anticodes in the sum-rank metric generalize those in the rank metric (the case $t=1$) and in the Hamming metric 
(the case $n_i=m_i=1$ for all~$i\in[t]$). 
In those instances, the optimal linear anticodes have been described explicitly.
We briefly survey these results.

\begin{notation}\label{notU}
Let $n,m \ge 1$ be integers and let $U \le \F^n$ and $V \le \F^m$ be subspaces.
We denote by $\Fcol{n}{m}{U}$ the space of matrices $X \in \F^{n \times m}$ whose column space is contained in $U$. Similarly, we let $\Frow{n}{m}{V}$ denote space of matrices $X \in \F^{n \times m}$ whose row space is contained in $V$. 
\end{notation}

It is not difficult to see that the spaces $\Fcol{n}{m}{U}$ and $\Frow{n}{m}{V}$ in Notation~\ref{notU} have dimension
$m\dim(U)$ and $n\dim(V)$, respectively.

The following result has been proven by Meshulam in~\cite{Mes85} for square matrices, 
but it is easy to verify that it is equally true for general rectangular matrices.

\begin{theorem}[\text{\cite[Thm.~3]{Mes85}}]\label{T-Meshulam}
Let $m \ge n \ge 1$ and $0 \le r \le n$ be integers.  
Let $A\leq\F^{n\times m}$ be an optimal linear $r$-anticode.
Then there exists an $r$-dimensional subspace $U\leq\F^n$  such that $A=\Fcol{n}{m}{U}$ or, if $n=m$,
$A=\Frow{n}{m}{U}$.
\end{theorem}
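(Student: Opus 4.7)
The plan is to proceed by induction on $r$. The base case $r = 0$ is immediate: \cref{T-OptAnti} gives $\dim A = 0$, so $A = \{0\} = \Fcol{n}{m}{\{0\}}$. For $r \ge 1$, I first argue that an optimal $A$ must contain a matrix of rank exactly $r$: otherwise every element of $A$ has rank at most $r-1$ so that $A$ is an $(r-1)$-anticode, and \cref{T-OptAnti} would give $\dim A \le m(r-1) < mr$, contradicting optimality. The group $\GL_n(\F) \times \GL_m(\F)$ acts on $\F^{n\times m}$ via $(P, Q) \cdot X = P X Q^{-1}$, preserving ranks, optimality, and the two families $\{\Fcol{n}{m}{U}\}_U$ and $\{\Frow{n}{m}{V}\}_V$; after replacing $A$ by its image under a suitable group element, I may assume the distinguished rank-$r$ element is $X_0 = \begin{pmatrix} I_r & 0 \\ 0 & 0 \end{pmatrix}$, using row blocks of sizes $r$ and $n-r$, and column blocks of sizes $r$ and $m-r$.

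Next, for every $Y \in A$ write $Y = \begin{pmatrix} Y_{11} & Y_{12} \\ Y_{21} & Y_{22} \end{pmatrix}$ in the matching block form. After extending scalars to a sufficiently large extension field if necessary (preserving the $\F$-dimension of $A$; the small-$|\F|$ case needs a separate direct combinatorial treatment in the same spirit), the rank bound $\rk(X_0 + tY) \le r$ holds for all $t$ in a Zariski-dense set. For generic $t$ the top-left block $I_r + t Y_{11}$ is invertible, so the Schur-complement identity for the rank forces
\[
t Y_{22} \;=\; t^2\, Y_{21}\, (I_r + t Y_{11})^{-1}\, Y_{12}
\]
as a matrix-valued rational identity in $t$. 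Expanding the right-hand side as a formal power series at $t=0$ and matching coefficients yields $Y_{22} = 0$ together with the infinite family of bilinear relations $Y_{21}\, Y_{11}^k\, Y_{12} = 0$ for every integer $k \ge 0$.

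Finally, let $U_0 = \langle e_1, \ldots, e_r \rangle \le \F^n$, $V_0 = \langle e_1, \ldots, e_r \rangle \le \F^m$, and let $\pi_{21} \colon A \to \F^{(n-r)\times r}$ and $\pi_{12} \colon A \to \F^{r\times(m-r)}$ denote the block projections. If $\pi_{21} \equiv 0$, then combined with $Y_{22} = 0$ every element of $A$ lies in $\Fcol{n}{m}{U_0}$, and the dimension equality $\dim A = mr = \dim \Fcol{n}{m}{U_0}$ forces $A = \Fcol{n}{m}{U_0}$. Symmetrically, if $\pi_{12} \equiv 0$ then $A \subseteq \Frow{n}{m}{V_0}$, which has dimension $nr$, so equality can only hold when $n = m$. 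The main obstacle is ruling out the mixed scenario in which both projections are nontrivial: applying the bilinear relations to $Y + Y' \in A$ and extracting polarizations produces identities such as $Y_{21} Y'_{12} + Y'_{21} Y_{12} = 0$ and their higher-degree analogues, and combining these with the rigid dimension count $\dim(\ker \pi_{21} \cap \ker \pi_{12}) \le r^2$ against $\dim A = mr$ should force any nontrivial mixed image to produce an element of $A$ of rank exceeding $r$, contradicting the anticode property. This delicate counting step essentially recovers the classical Flanders classification of constant-rank matrix subspaces.
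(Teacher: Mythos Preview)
The paper does not prove \cref{T-Meshulam}; it is quoted from \cite{Mes85} (for square matrices, with a remark that the rectangular case is analogous). Meshulam's own argument goes via the combinatorial line-covering invariant $\rho(\mB)$ of \cref{D-iniB} and \cref{T-M}, the same machinery this paper later applies in the proof of \cref{tecn}. Your route through Schur complements and Flanders-type rank identities is a genuinely different line of attack, and in principle it can be made to work, but as written it has real gaps.

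The most serious problem is the ``mixed scenario'' at the end: this is the entire content of the theorem, and you only assert that the polarized identities $Y_{21}Y'_{12}+Y'_{21}Y_{12}=0$ together with a dimension count ``should force'' a contradiction. That step is not carried out, and it is exactly where the difficulty lies; the argument you sketch does not obviously produce a rank-$(r{+}1)$ element, and invoking ``the classical Flanders classification'' is circular, since Flanders' theorem \emph{is} essentially the result you are trying to prove. Two further issues: the induction on~$r$ announced in the first sentence is never invoked anywhere, so it is unclear what role it plays; and the extension-of-scalars step is not justified over finite fields. The rank bound $\rk(X)\le r$ for all $X\in A$ only says the $(r{+}1)\times(r{+}1)$ minors vanish on the $\F$-points of a linear space, which for finite~$\F$ does not force the minor polynomials to vanish identically, so $A\otimes_\F\F'$ need not remain an $r$-anticode. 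Deferring this to an unspecified ``separate direct combinatorial treatment'' leaves the finite-field case, which is central to this paper, unproved.
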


The next result from~\cite{ravagnani2016generalized} describes optimal linear anticodes in the Hamming metric over fields with at least 3 elements.
In that paper the result has been stated for finite fields, 
but the proof (and the proof of the accompanying lemma) shows that it is true for arbitrary fields.

\begin{theorem}[\mbox{\cite[Prop.~9]{ravagnani2016generalized}}]\label{T-OptAntiHamm}
Suppose that $|\F|\geq3$ and let $A\leq\F^t$ be an optimal $r$-anticode in the Hamming metric. 
Then $\dim(A)=r$ (see also Theorem~\ref{T-OptAnti}) and there exists a subset $\mI\subseteq[t]$ such that $|\mI|=r$ and
$A=A_1\oplus\ldots\oplus A_t$, where $A_i=\F$ if $i\in\mI$ and $A_i=\{0\}$ otherwise.
Note that, trivially, each~$A_i$ is an optimal anticode in~$\F$.
\end{theorem}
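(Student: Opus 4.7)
The plan is to first reduce the statement to the following concrete claim: any $r$-dimensional linear subspace $A\leq\F^t$ all of whose nonzero vectors have Hamming weight at most~$r$ must be a coordinate subspace $\F^\mI$ for some $\mI\subseteq[t]$ with $|\mI|=r$, provided $|\F|\geq3$. Indeed, specializing \cref{T-OptAnti} to $n_i=m_i=1$ makes $\mU_r$ consist of the $0/1$-vectors of weight~$r$, so the upper bound evaluates to~$r$; optimality of $A$ therefore gives $\dim(A)=r$. Since $A$ is linear, the anticode condition is equivalent to $\wH(a)\leq r$ for every $a\in A$.

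Let $I=\{i\in[t] : a_i\neq 0\text{ for some }a\in A\}$ be the support of~$A$. Then $A$ embeds into the coordinate subspace $\F^I\leq\F^t$, so $r=\dim(A)\leq|I|$. The projection $\pi_I\colon\F^t\to\F^I$ restricts to an injection on~$A$ that preserves Hamming weight, so we may replace $A$ by $\pi_I(A)$ and assume $I=[t]$. If we show in this reduced setting that $t=r$, then $A$ is an $r$-dimensional subspace of $\F^r$ and hence equals $\F^r$, which translates back to $A=\F^I$ in the original ambient space, as required.

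So suppose for contradiction that $A\leq\F^t$ has full support $[t]$ with $t>r$ and all codewords of weight at most~$r$. Pick a generator matrix of~$A$ and, after a permutation of the coordinates (which preserves Hamming weight) and suitable row operations, bring it to the form $G=[I_r\mid B]$ with $B\in\F^{r\times(t-r)}$. For every $\alpha\in\F^*$ the vector $v_\alpha=(\alpha,1,\ldots,1)\in\F^r$ has weight~$r$, hence the codeword $v_\alpha G=(v_\alpha, v_\alpha B)$ has weight $r+\wH(v_\alpha B)\leq r$. This forces $v_\alpha B=0$ for every $\alpha\in\F^*$, i.e.
\[
   \alpha\, B_{1,j}+\sum_{i=2}^{r}B_{i,j}=0\quad\text{for all }j\in[t-r]\text{ and all }\alpha\in\F^*.
\]
Since $|\F|\geq3$, we may pick two distinct elements $\alpha_1,\alpha_2\in\F^*$; subtracting the corresponding equations yields $(\alpha_1-\alpha_2)B_{1,j}=0$, hence $B_{1,j}=0$ for every~$j$. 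Repeating the argument with the other rows playing the role of the first row (i.e. choosing $v_\alpha$ with the $\alpha$ in position~$i$) gives $B_{i,j}=0$ for all $i,j$, so $B=0$. But then the last $t-r\geq 1$ columns of $G$ are zero, contradicting the assumption that $A$ has full support on $[t]$.

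The main obstacle is precisely the last step, which is also where the hypothesis $|\F|\geq 3$ intervenes: one needs two distinct nonzero scalars in~$\F$ to perform the subtraction that eliminates $B_{1,j}$. Over $\F_2$ the argument collapses, and in fact the conclusion fails, as witnessed e.g.\ by the $2$-dimensional even-weight code $\{000,110,101,011\}\leq\F_2^3$ which is a $2$-anticode but not a coordinate subspace.
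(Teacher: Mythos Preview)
The paper does not supply its own proof of this statement; it is quoted from \cite{ravagnani2016generalized} (where it is proved via an accompanying lemma, as the paper remarks). So there is nothing in the present paper to compare your argument against.

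That said, your proof is correct and self-contained. The reduction to a subspace of full support is clean, and the core step---using the vectors $v_\alpha=(\alpha,1,\ldots,1)$ with $\alpha\in\F^*$ to force the non-systematic block $B$ of a generator matrix $G=[I_r\mid B]$ to vanish---works exactly as you describe. The hypothesis $|\F|\geq 3$ enters precisely where you locate it: one needs two distinct nonzero scalars to cancel the tail $\sum_{i\geq 2}B_{i,j}$ and isolate $B_{1,j}$. Your closing remark with the $\F_2$ counterexample is also apt (and matches the example given in the paper immediately after the theorem). The only edge cases, $r=0$ and $r=1$, are harmless: for $r=0$ the anticode is $\{0\}$, and for $r=1$ the equation $v_\alpha B=\alpha B_{1,\cdot}=0$ already kills $B$ without needing two scalars.
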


The simple example $A=\subspace{(1,0,1),(0,1,1)}$ shows that the previous result is not true for the binary field~$\F=\F_2$.

\section{Optimal Linear Anticodes}\label{S-OptLinAnti}
In this section we give a complete classification of optimal linear anticodes $A \le \Pi$ over any field~$\F$. 
We prove that every such anticode is the direct product of optimal rank-metric anticodes and an optimal Hamming-metric anticode. 
More precisely, the following holds.
Recall the notation from~\eqref{e-nimi}--\eqref{e-NM}.

\begin{theorem}\label{T-OptAntiPlain} 
Suppose $m_1\geq\ldots\geq m_s>m_{s+1}=\ldots=m_t=1$, where  the case $s=t$, thus $m_i>1$ for all~$i$, is allowed.
Let $A\leq\Pi$ be an optimal linear anticode.
Then $A=A_1\oplus\ldots\oplus A_s\oplus A^{\sf h}$, where~$A_i$ is an optimal linear anticode in $\F^{n_i\times m_i}$ for $i\in[s]$ and
$A^{\sf h}\leq\F^{1\times (t-s)}$ is an optimal linear anticode with respect to the Hamming metric.
In particular, if $|\F|\geq3$ then $A^{\sf h}=A^{\sf h}_1\oplus\ldots\oplus A^{\sf h}_{t-s}$, where each $A^{\sf h}_i\in\{\F,\{0\}\}$.
\end{theorem}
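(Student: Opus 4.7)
The plan is to combine blockwise projections with Meshulam-type bounds, and then use the optimality of $A$ to force the direct sum decomposition. First, for each $i\in[s]$ define $A_i := \pi_i(A) \leq \F^{n_i \times m_i}$, where $\pi_i$ is the natural projection of $\Pi$ onto the $i$-th factor, and let $r_i$ denote the maximum rank occurring in $A_i$. Analogously, denote by $\pi^{\sf h}\colon\Pi\to\F^{t-s}$ the projection onto the last $t-s$ (Hamming) blocks, set $A^{\sf h}:=\pi^{\sf h}(A)$, and let $r^{\sf h}$ be its maximum Hamming weight. Since $A$ is a linear sum-rank anticode, each $A_i$ is a linear rank-metric $r_i$-anticode and $A^{\sf h}$ is a linear Hamming $r^{\sf h}$-anticode; hence \cref{T-Meshulam} (using $m_i\geq 2$) gives $\dim A_i \leq m_i r_i$, and \cref{T-OptAnti} specialized to the Hamming setting gives $\dim A^{\sf h} \leq r^{\sf h}$.

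Second, since $A$ is canonically a subspace of the external direct sum $A_1\oplus\cdots\oplus A_s\oplus A^{\sf h}$, combining with the per-block bounds yields
\[
 \dim A \ \leq\ \sum_{i\leq s} m_i r_i + r^{\sf h}.
\]
The crucial next step is to verify $\sum_{i\leq s} r_i + r^{\sf h} \leq r$. Granting this, split $r^{\sf h}$ as $\sum_{j>s} r^{\sf h}_j$ with $r^{\sf h}_j\in\{0,1\}$ and extend $(r_1,\ldots,r_s,r^{\sf h}_{s+1},\ldots,r^{\sf h}_t)$ to some $u\in\mU_r$ by inflating coordinates up to their $n_i$-bounds; since inflating only increases $\sum m_i u_i$, this gives $\sum_{i\leq s} m_i r_i + r^{\sf h} \leq d^*$, where $d^* := \max_{u\in\mU_r}\sum m_i u_i$ is the bound from \cref{T-OptAnti}.

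Third, optimality $\dim A = d^*$ closes the chain $d^* = \dim A \leq \sum m_i r_i + r^{\sf h} \leq d^*$ into equalities, forcing: (i) $A = A_1\oplus\cdots\oplus A_s\oplus A^{\sf h}$ as an external direct sum; (ii) each $A_i$ attains the Meshulam bound and is therefore an optimal rank-metric anticode in $\F^{n_i\times m_i}$; (iii) $\dim A^{\sf h} = r^{\sf h}$, so $A^{\sf h}$ is an optimal Hamming anticode in $\F^{t-s}$. For $|\F|\geq 3$, applying \cref{T-OptAntiHamm} to $A^{\sf h}$ produces the further coordinate decomposition with each factor in $\{\F,\{0\}\}$.

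The main obstacle is the inequality $\sum_{i\leq s}r_i+r^{\sf h}\leq r$. Over sufficiently large fields this is immediate: a generic linear combination $Z$ of elements of $A$ realizing each per-block maximum satisfies $\rk(Z_i)=r_i$ for $i\leq s$ and $\wt(\pi^{\sf h}(Z))=r^{\sf h}$, so the anticode property forces $\srk(Z)=\sum r_i+r^{\sf h}\leq r$. Over arbitrary $\F$ (notably $\F_2$, where genericity fails) a finer argument is needed; one natural route is to analyze $\psi(A)\leq\F^{N\times M}[\mP]$ directly as a rank-metric anticode in the block-supported ambient space and appeal to the structural result behind \cref{T-OptAnti}, namely \cite[Thm~46]{GoRa17}, to extract the blockwise decomposition without having to establish the sum bound on the $r_i$ and $r^{\sf h}$ beforehand.
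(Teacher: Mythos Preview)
Your strategy is clean and, over sufficiently large fields, correct: a generic element of $A$ simultaneously realises the maximum rank in every block, so $\sum_{i\le s} r_i + r^{\sf h}\le r$ follows from the anticode condition, and then the sandwich $d^*=\dim A\le \sum m_ir_i+r^{\sf h}\le d^*$ forces all the equalities you need. This is genuinely different from the paper's route, which peels off one block at a time via a Meshulam-style reduced-row-echelon basis and explicit test elements (\cref{tecn}); your argument is shorter when it applies.

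The gap is exactly where you flag it: the inequality $\sum_{i\le s} r_i + r^{\sf h}\le r$ is \emph{false} for linear $r$-anticodes in general over small fields, so it cannot be proved without invoking optimality --- yet your chain of inequalities needs it \emph{before} optimality is used. Concretely, over $\F_2$ take $\Pi=\F_2^{2\times 2}\oplus\F_2^{2\times 2}$ and
\[
A=\big\langle (I_2,0),\ (E_{11},E_{11})\big\rangle.
\]
The four elements have sum-rank $0,2,2,2$, so $A$ is a $2$-anticode; but $\pi_1(A)$ contains $I_2$ (so $r_1=2$) and $\pi_2(A)$ contains $E_{11}$ (so $r_2=1$), giving $r_1+r_2=3>2=r$. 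This $A$ is not optimal, but that is the point: your argument for the inequality does not use optimality, so it must hold for all linear anticodes, and it does not.

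Your proposed fallback---appealing to the structural content of \cite[Thm.~46]{GoRa17}---is not a proof as stated: that reference furnishes only the dimension bound of \cref{T-OptAnti}, not a block decomposition. The paper's proof of \cref{tecn} is precisely the missing ingredient: it builds a basis in reduced echelon form with respect to the lexicographic order on block entries, uses Meshulam's line-covering theorem to show the pivots fill exactly $r_i$ full rows in block~$i$ with $\sum r_i=r$ and $\sum r_im_i=\dim A$, and then constructs two explicit elements $Z,T\in A$ whose sum-rank constraint forces a specific off-block entry of each basis matrix in $\mB^1$ to vanish. That last step (Claims~E and~F) is the field-independent substitute for your genericity argument, and it is where the real work lies. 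If you want to complete your approach over arbitrary $\F$, you will need an argument of comparable depth; as written, the proposal is incomplete.
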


Before we turn to the proof we present a numerical lemma that describes the maximum in Theorem~\ref{T-OptAnti}.

\begin{lemma}\label{L-Maximize}
Let $0\leq r< N$ and set $K:=\max\{\sum_{i=1}^t m_i u_i\mid (u_1,\ldots,u_t)\in\mU_r\}$.
Let $j\in[t]$ and $\delta\in\{0,\ldots,n_j-1\}$ 
be the unique integers such that $r=\sum_{i=1}^{j-1}n_i+\delta$.
Then the following hold.
\begin{enumerate}
\item $K=\sum_{i=1}^{j-1}m_in_i+m_j\delta$.
\item Let $\ell,\ell'$ be the unique integers such that  
         \[
             m_1\geq \ldots\geq m_{\ell-1}> m_{\ell}=\ldots=m_j=\ldots=m_{\ell'}>m_{\ell'+1}\geq\ldots\geq m_t
         \]
         and let $(u_1,\ldots,u_t)\in\mU_r$. Then $\sum_{i=1}^t m_i u_i=K$ if and only if
      \begin{enumerate}[label=(\roman*)]
      \item $u_i=n_i$ for $i<\ell$,
      \item $\sum_{i=\ell}^{\ell'}u_i=\sum_{i=\ell}^{j-1}n_i+\delta$,
       \item $u_i=0$ for $i>\ell'$.
      \end{enumerate}
\end{enumerate}
\end{lemma}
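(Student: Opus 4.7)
The plan is to view Lemma~2.6 as a discrete resource-allocation problem: we distribute $r$ units across $t$ bins with capacities $n_i$ and per-unit rewards $m_i$, where $m_1\geq\ldots\geq m_t$. The natural candidate for a maximizer is the greedy allocation $\bar u_i=n_i$ for $i<j$, $\bar u_j=\delta$, $\bar u_i=0$ for $i>j$; it is feasible, lies in $\mU_r$, and evaluates to $\sum_{i=1}^{j-1}m_in_i+m_j\delta$. The workhorse throughout is the following exchange step: if $(u_i)\in\mU_r$ and there exist $i<k$ with $u_i<n_i$ and $u_k>0$, then replacing $(u_i,u_k)$ by $(u_i+1,u_k-1)$ stays in $\mU_r$ and changes the objective by $m_i-m_k\geq 0$, with strict inequality when $m_i>m_k$.

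For part~(a), I would iterate this exchange on an arbitrary $(u_i)\in\mU_r$ to migrate mass toward small indices. The process terminates at the greedy allocation $\bar u$ without ever decreasing the objective, which yields $K\leq\sum_{i=1}^{j-1}m_in_i+m_j\delta$; together with feasibility of $\bar u$, this proves~(a).

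For part~(b), the forward direction is the heart of the argument. Assume $(u_i)\in\mU_r$ attains~$K$. If $u_i<n_i$ for some $i<\ell$, then $\sum_{k\geq\ell}u_k=r-\sum_{k<\ell}u_k\geq r-\sum_{k<\ell}n_k+1=\sum_{k=\ell}^{j-1}n_k+\delta+1\geq 1$, so some $k\geq\ell$ has $u_k>0$; since $m_i\geq m_{\ell-1}>m_\ell\geq m_k$, the exchange strictly increases the objective, contradicting maximality and forcing~(i). A dual argument proves~(iii): if $u_i>0$ for some $i>\ell'$, the inequality $\delta<n_j\leq\sum_{k=j}^{\ell'}n_k$ yields $\sum_{k\leq\ell'}u_k\leq r-1<\sum_{k\leq\ell'}n_k$, producing some $k\leq\ell'$ with $u_k<n_k$; since $m_k\geq m_{\ell'}>m_{\ell'+1}\geq m_i$, the exchange strictly increases the objective, contradicting maximality. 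Then (ii) follows immediately from (i), (iii) and $\sum u_i=r$. The converse is a direct computation: any $(u_i)$ satisfying (i)--(iii) evaluates to $\sum_{i<\ell}m_in_i+m_\ell\bigl(\sum_{i=\ell}^{j-1}n_i+\delta\bigr)=\sum_{i=1}^{j-1}m_in_i+m_j\delta=K$, using $m_\ell=m_i$ for $\ell\leq i\leq\ell'$ together with $m_\ell=m_j$.

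The main obstacle will be handling degenerate cases cleanly: $\ell=1$, $\ell'=t$, $\delta=0$, or $j\in\{\ell,\ell'\}$. Each of these collapses one of the boundary strict inequalities $m_{\ell-1}>m_\ell$ or $m_{\ell'}>m_{\ell'+1}$ into a vacuous condition, which must be handled by interpreting empty sums as zero and empty index sets as imposing no constraint. The two quantitative inputs $\sum_{k=\ell}^{j-1}n_k+\delta+1\geq 1$ and $\delta<\sum_{k=j}^{\ell'}n_k$ remain valid in all cases thanks to $\ell\leq j\leq\ell'$ and $\delta<n_j$, and they are precisely what converts maximality from a weak statement ``no swap gains'' into the rigid structure (i)--(iii).
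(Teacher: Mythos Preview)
Your proof is correct. The exchange/swap argument is sound, and you have handled the edge cases properly: the key numerical facts $\sum_{k=\ell}^{j-1}n_k+\delta+1\geq 1$ and $\delta<n_j\leq\sum_{k=j}^{\ell'}n_k$ hold in all regimes because $\ell\leq j\leq\ell'$ and $0\leq\delta<n_j$.

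The paper takes a somewhat different route. It cites part~(a) from an earlier paper rather than proving it, whereas your exchange iteration gives a self-contained argument. For part~(b), the paper argues~(i) by writing $u_i=n_i-\alpha_i$, deriving a strict algebraic inequality on $\sum_{i<j}m_iu_i$, and chasing it through to the contradiction $r>r$; it then obtains~(iii) not by a swap but by forcing the equality $\sum_{i\geq\ell}(m_\ell-m_i)u_i=0$ from a chain of inequalities, which makes each term vanish. Your single-unit exchange is more transparently combinatorial and treats~(i) and~(iii) symmetrically, while the paper's algebraic manipulation avoids having to exhibit an explicit swap partner. Both arguments are short; yours has the minor advantage of being fully self-contained and of making the ``greedy is optimal'' intuition explicit.
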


\begin{proof} Part (a) is \cite[Lemma 3.12]{us_srk}. Let us prove part (b).
It is easy to see that if $(u_1,\ldots,u_t)\in \mU_r$ satisfies (i)--(iii), then $\sum_{i=1}^t m_iu_i=K$.
Thus, let $(u_1,\ldots,u_t)\in\mU_r$ be such that $\sum_{i=1}^tm_iu_i=K$.
We first show~(i).
Write $u_i=n_i-\alpha_i$ for all $i\in[t]$.
Assume by contradiction that $\sum_{i=1}^{\ell-1}\alpha_i>0$.
This means in particular that $\ell\geq2$ and thus the set $\{m_1,\ldots,m_{\ell-1}\}$ is not empty.
Since each element in this set satisfies
$m_i>m_\ell=m_j$,
we obtain the following strict inequality
\allowdisplaybreaks
\begin{align*}
  \sum_{i=1}^{j-1}m_iu_i&=\sum_{i=1}^{j-1}m_in_i-\sum_{i=1}^{j-1}m_i\alpha_i
  \\ &=K-m_j\delta-\sum_{i=1}^{\ell-1}m_i\alpha_i-m_j\sum_{i=\ell}^{j-1}\alpha_i\\
   &<K-m_j(\delta+\sum_{i=1}^{j-1}\alpha_i).
\end{align*}
As a consequence $m_j\sum_{i=j}^t u_i\geq\sum_{i=j}^tm_iu_i=K-\sum_{i=1}^{j-1}m_iu_i>m_j(\delta+\sum_{i=1}^{j-1}\alpha_i)$ and thus
$\sum_{i=j}^t u_i>\delta+\sum_{i=1}^{j-1}\alpha_i$, which in turn yields
\[
  r=\sum_{i=1}^t u_i=\sum_{i=1}^{j-1}(n_i-\alpha_i)+\sum_{i=j}^tu_i>\sum_{i=1}^{j-1}(n_i-\alpha_i)+\delta+\sum_{i=1}^{j-1}\alpha_i=r,
\]
which is a contradiction. This proves~(i).
Next, part~(i) along with $\sum_{i=1}^t u_i=\sum_{i=1}^{j-1}n_i+\delta$ implies
\begin{equation}\label{e-Partiii}
     \sum_{i=\ell}^t u_i=\sum_{i=\ell}^{j-1}n_i+\delta.
\end{equation}
Now the assumption $K=\sum_{i=1}^t m_iu_i$ together with \eqref{e-Partiii} and~(a) yields
\[
   K=\sum_{i=1}^tm_iu_i\leq\sum_{i=1}^{\ell-1}m_in_i+m_\ell\sum_{i=\ell}^tu_i=\sum_{i=1}^{\ell-1}m_in_i
   +m_{\ell}\sum_{i=\ell}^{j-1}n_i+m_{\ell}\delta= K.
\]
As a consequence, the inequality is actually an equality, and this means $\sum_{i=\ell}^t(m_\ell-m_i)u_i=0$.
Since all terms in this sum are non-negative and $m_\ell>m_i$ for $i>\ell'$, this implies $u_i=0$ for $i>\ell'$. This proves~(iii) and now~(ii) follows from \eqref{e-Partiii}.
\end{proof}

In the proof of \cref{T-OptAntiPlain} we will use
the isometry~$\psi$ between the metric spaces $(\Pi,\srk)$ and $(\F^{N\times M}[\mP],\rk)$ in~\eqref{e-Psi} and 
where~$\mP=\cup_{i=1}^t\mP^i$ is as in \eqref{e-calP}.
Note that~$\mP^i$ is the index set of the entries in the $i$-th block on the diagonal of the matrices in $\F^{N\times M}[\mP]$; see also Figure~\ref{f1}.
We need additional terminology.

\begin{definition}\label{D-calP}
\begin{enumerate}
\item For $i\in[t]$, $a\in[N_{i-1}+1,N_i]$ and $b\in[M_{i-1}+1,M_i]$ define
          \[
                L_{a,\boldsymbol{\cdot}}^i:=\left\{(a,y) : M_{i-1}+1 \leq y \leq M_i \right\} \text{ and } 
                L_{\boldsymbol{\cdot},b}^i:=\left\{(x,b) : N_{i-1}+1 \leq  x \leq N_i \right\}.
          \]
          Hence $L_{a,\boldsymbol{\cdot}}^i$ is the horizontal line through position $(a,M_i)$ in the $i$-th rectangular block 
          and~$L_{\boldsymbol{\cdot},b}^i$ is the vertical line through position $(N_i,b)$ in the same block (more precisely, these are just line segments).
\item Set $L(\mP):=\{L_{a,\boldsymbol{\cdot}}^i\mid i\in[t],a\in[N_{i-1}+1,N_i]\}\cup\{L_{\boldsymbol{\cdot},b}^i\mid i\in[t],b\in[M_{i-1}+1,M_i]\}$, 
         that is, $L(\mP)$ is the set of all lines in $\mP$.
\end{enumerate}
\end{definition}

\begin{figure}
\begin{center}
\begin{tikzpicture}[scale=0.5]
   \draw (-2.5,5) node (x0) [label=center: \mbox{$[N]\times[M]=$}] {} ;
   \draw (0,0) rectangle (11,9);
   \filldraw[black!13!white,draw=black] (9,0) rectangle (11,2);
   \filldraw[black!13!white,draw=black] (0,6) rectangle (4,9);
   \filldraw[black!13!white,draw=black] (4,4) rectangle (7,6);
   \draw (8,3) node (x1) [label=center: $\ddots$] {} ;
   \draw[dashed] (5,4) -- (5,6);
  \draw[dashed] (4,4.5) -- (7,4.5);
  \draw (5,6.4) node (x2) [label=center: \scriptsize{$L_{\boldsymbol{\cdot},b}^2$}] {} ;
  \draw (7.6,4.5) node (x3) [label=center: \scriptsize{$L_{a,\boldsymbol{\cdot}}^2$}] {} ;
  \draw (2,7.5) node (x4) [label=center: \small{$\mP^1$}] {} ;
  \draw (10,1) node (x5) [label=center: \small{$\mP^t$}] {} ;
  \draw (6,5) node (x5) [label=center: \small{$\mP^2$}] {} ;
\end{tikzpicture}
\caption{}\label{f1}
\end{center}
\end{figure}

As usual, for a matrix $B\in\F^{N\times M}$, we write~$B_{i,j}$ for its entry at position  $(i,j)$, and for a vector~$v$ we write~$v_j$ for
its $j$-th entry.

\begin{definition}\label{D-iniB}
For a nonzero $B\in\F^{N\times M}$ we set $\ini(B):=\min\{(i,j)\mid B_{i,j}\neq0\}$, where the minimum is taken with respect to the lexicographic order on  $[N]\times [M]$, i.e.,
$(i,j)<(i',j')$ iff $[i<i']$ or $[i=i'$ and $j<j']$.
We call $\ini(B)$ the \textbf{position of the initial entry} of~$B$.
For any set of matrices $\mB \subseteq \Pi$, define $\ini(\mB)=\{\ini(B)\mid B\in\mB, \, B \neq 0\}$.
Finally, let 
\[
     \rho(\mB):=\min\big\{ |S| \, : \, S \subseteq L(\mP), \, \ini(\mB) \subseteq \cup_{\ell\in S}\ell \big\},
\]
that is, $\rho(\mB)$ is the minimum number of lines in~$\mP$ whose union contains $\ini(\mB)$.
\end{definition}

We will need the following result of Meshulam. The proof in~\cite{Mes85} is for square matrices but the same argument remains valid for rectangular ones.

\begin{theorem}[\text{\cite[Thm.~1]{Mes85}}] \label{T-M}
Let $\mB\subseteq\F^{N\times M}$ be a non-empty subset. Then the span of $\mB$ contains a
matrix of rank at least $\rho(\mB)$. 
\end{theorem}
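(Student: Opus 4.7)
The plan is to turn the combinatorial quantity $\rho(\mB)$ into a rank bound via K\"onig's theorem followed by a determinantal argument on a carefully chosen $k\times k$ submatrix, where $k:=\rho(\mB)$. Because each line in $L(\mP)$ is in bijection with a unique row or column of $[N]\times[M]$ (each row or column lies in exactly one block), $\rho(\mB)$ equals the minimum vertex cover of the bipartite graph on rows and columns whose edges are $\ini(\mB)$. K\"onig's theorem then supplies a matching of size $k$: positions $(i_1,j_1),\ldots,(i_k,j_k)\in\ini(\mB)$ with the $i_\ell$ pairwise distinct and the $j_\ell$ pairwise distinct. For each $\ell$ I would fix $B_\ell\in\mB$ with $\ini(B_\ell)=(i_\ell,j_\ell)$ and, after relabelling, assume $i_1<i_2<\cdots<i_k$.

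Next, I would consider the $k\times k$ submatrix $M(c)$ of $\sum_{\ell=1}^k c_\ell B_\ell$ indexed by rows $\{i_1,\ldots,i_k\}$ and columns $\{j_1,\ldots,j_k\}$, and view $\det M(c)$ as a polynomial in $c=(c_1,\ldots,c_k)$. It suffices to show this polynomial is not identically zero, since then by Alon's Combinatorial Nullstellensatz (applicable because each $c_\ell$ appears to degree at most $1$ and $|\F|\geq 2$), some evaluation $c\in\F^k$ makes $M(c)$ invertible, and the corresponding linear combination is an element of the span of $\mB$ of rank at least $k$.

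The technical heart is computing the coefficient of $c_1c_2\cdots c_k$ in $\det M(c)$. Expanding the determinant over $\sigma\in S_k$ and then selecting, for each row $\ell$, a matrix $B_{\tau(\ell)}$ that supplies the factor $c_{\tau(\ell)}$, the coefficient collapses in two lex-order stages. (i) Whenever $\tau(\ell)>\ell$ one has $i_\ell<i_{\tau(\ell)}$, so the entire row $i_\ell$ of $B_{\tau(\ell)}$ lies lex-strictly before $\ini(B_{\tau(\ell)})$ and therefore vanishes; hence only $\tau=\mathrm{id}$ contributes, reducing the coefficient to $\det D$ with $D_{\ell,\ell'}=(B_\ell)_{i_\ell,j_{\ell'}}$. (ii) In $\det D$, if $\sigma\neq\mathrm{id}$ then $j_{\sigma(\ell)}\geq j_\ell$ must fail for some $\ell$ (by a sum-preservation argument using distinctness of the $j_\ell$), so $(i_\ell,j_{\sigma(\ell)})<\ini(B_\ell)$ and $D_{\ell,\sigma(\ell)}=0$. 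Only $\sigma=\mathrm{id}$ survives, and the coefficient equals $\prod_{\ell=1}^k(B_\ell)_{i_\ell,j_\ell}$, a product of nonzero initial entries.

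The main obstacle is this double-layered vanishing analysis: both the row-ordering argument (killing $\tau\neq\mathrm{id}$) and the column-ordering argument (killing $\sigma\neq\mathrm{id}$) are needed, and each relies on the distinctness of one side of the K\"onig matching. Organising the bookkeeping so that each permutation type is eliminated in the correct stage, without miscounting signs or leaving stray terms, is the delicate point; once this is done, the Nullstellensatz step is routine and the theorem follows for any field $\F$.
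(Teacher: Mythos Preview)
The paper does not give its own proof of this theorem; it simply cites Meshulam~\cite[Thm.~1]{Mes85} and remarks that the argument there, stated for square matrices, carries over to the rectangular case. Your proposal is a correct and complete reconstruction of Meshulam's argument: the reduction of $\rho(\mB)$ to a bipartite vertex-cover number via the observation that each line of $L(\mP)$ is the trace of a unique full row or column on $\mP$ is exactly right, K\"onig's theorem then supplies the size-$k$ matching, and your two-stage lex-order elimination of $\tau\neq\mathrm{id}$ and $\sigma\neq\mathrm{id}$ is clean and correct, yielding the nonzero coefficient $\prod_\ell (B_\ell)_{i_\ell,j_\ell}$.

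One small wording issue: it is not true that each $c_\ell$ appears in $\det M(c)$ to degree at most~$1$ (the polynomial is homogeneous of degree $k$, but individual variables can occur with higher multiplicity). What the Combinatorial Nullstellensatz actually needs, and what you have shown, is that $c_1\cdots c_k$ is a monomial of \emph{maximal total degree} with nonzero coefficient; since each exponent in that monomial is $1$, the hypothesis $|S_\ell|>1$ is met by $S_\ell=\F$ for any field. With that clarification the argument stands. (Meshulam's 1985 paper predates Alon's Nullstellensatz, so his original endgame is phrased differently, but your packaging is arguably cleaner and works uniformly over all fields.)
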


The main step for proving \cref{T-OptAntiPlain} is the following result for which we need the projections
\begin{equation}\label{e-Proj}
\left.\begin{split}
  p_1:\;&\Pi\longrightarrow \F^{n_1\times m_1},&\ (X_1,\ldots,X_t)&\longmapsto X_1,\\
   p_{2,...,t}:\;&\Pi\longrightarrow \mediumoplus_{i=2}^t\F^{n_i\times m_i},& \ (X_1,\ldots,X_t)&\longmapsto \ (X_2,\ldots,X_t).
\end{split}\qquad\right\}
\end{equation}

\begin{theorem}\label{tecn}
Suppose $t \ge 2$ and $m_1 \ge 2$.
Let $A \le \Pi$ be an optimal linear anticode. Then
$A=p_1(A) \oplus p_{2,...,t}(A)$.
Moreover, the spaces $p_1(A)$ and $p_{2,...,t}(A)$ are optimal linear anticodes
in $\F^{n_1 \times m_1}$ and $\mediumoplus_{i=2}^t\F^{n_i \times m_i}$,
respectively.
\end{theorem}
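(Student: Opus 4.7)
The plan is to reduce the direct-sum assertion to a dimension identity, which in turn I establish using the isometry $\psi$ of \eqref{e-Psi} and Meshulam's Theorem~\ref{T-M}. Since $\Pi$ is a direct product, $A\subseteq p_1(A)\oplus p_{2,\ldots,t}(A)$ automatically, and the short exact sequence $0\to\ker(p_1|_A)\to A\to p_1(A)\to 0$ together with the injectivity of $p_{2,\ldots,t}$ on $\ker(p_1|_A)$ gives $\dim A\le\dim p_1(A)+\dim p_{2,\ldots,t}(A)$; the direct-sum decomposition is equivalent to equality here.

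I would then work through $\psi$. The image $\psi(A)\subseteq\F^{N\times M}[\mP]$ is a linear $r$-anticode in the rank metric, so by Theorem~\ref{T-M} the set $\ini(\psi(A))$ is covered by at most $r$ lines of $L(\mP)$. Since these lines live inside individual blocks, the cover decomposes as $\rho_k$ lines inside $\mP^k$ with $\sum_k\rho_k\le r$ and $|\ini(\psi(A))\cap\mP^k|\le m_k\rho_k$. Summing, and combining with $\dim A=|\ini(\psi(A))|$ and the optimality $\dim A=K$ from Theorem~\ref{T-OptAnti}, forces $|\ini(\psi(A))\cap\mP^k|=m_k\rho_k$ for each $k$ and makes $(\rho_1,\ldots,\rho_t)$ an optimal partition in the sense of Lemma~\ref{L-Maximize}. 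Because $\mP^1$ lexicographically precedes the other blocks, the elements of $\psi(A)$ with initial entry outside $\mP^1$ are exactly those with zero first block, giving $\dim p_1(A)=m_1\rho_1$ and $\dim\ker(p_1|_A)=\sum_{k\ge 2}m_k\rho_k$.

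It then remains to prove $\dim p_{2,\ldots,t}(A)=\sum_{k\ge 2}m_k\rho_k$; the ``$\ge$'' direction is automatic from the injectivity of $p_{2,\ldots,t}|_{\ker(p_1|_A)}$, and for ``$\le$'' I would apply Theorem~\ref{T-OptAnti} to $p_{2,\ldots,t}(A)$ in its tail ambient to obtain $\dim p_{2,\ldots,t}(A)\le K_2(s)$, where $s:=\max_{X\in A}\srk(p_{2,\ldots,t}(X))$ and $K_2$ denotes the restriction of the maximum in Theorem~\ref{T-OptAnti} to blocks $2,\ldots,t$. By the monotonicity of $K_2$ encoded in Lemma~\ref{L-Maximize}, and the Meshulam bound $\rho_1\le s_1:=\max_{X\in A}\rk(p_1(X))$ applied to $p_1(A)$, this reduces to showing $s_1+s\le r$. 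This last inequality is the step I expect to be the main obstacle. Over any field of size at least three it should follow from a generic-pencil argument: picking $X,Y\in A$ witnessing the two maxes, a generic member of $\{X+\lambda Y:\lambda\in\F\}\subseteq A$ simultaneously achieves first-block rank $\ge s_1$ and tail sum-rank $\ge s$, so the anticode bound yields $s_1+s\le r$. Over $\F_2$ the pencil contains only the single element $X+Y$ and this direct route breaks down, so a separate argument is needed; I would exploit the rigid ``full-row'' structure of $\ini(\psi(A))\cap\mP^1$ from the previous paragraph, together with the hypothesis $m_1\ge 2$, to show that any excess $s>r-\rho_1$ would allow augmenting $A$ by a suitable nonzero element of $\{0\}\oplus p_{2,\ldots,t}(A)$ without exceeding radius $r$, contradicting the maximality $\dim A=K$. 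Once $s_1+s\le r$ is in hand, the direct-sum decomposition follows, and the resulting dimensions $\dim p_1(A)=m_1s_1$ and $\dim p_{2,\ldots,t}(A)=K_2(s)$ saturate the respective bounds, proving the optimality of both factors.
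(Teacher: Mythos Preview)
Your dimensional framework is sound and the reduction is valid: once $s_1+s\le r$ is established (with $s_1=\max_{X\in A}\rk(p_1(X))$ and $s=\max_{X\in A}\srk(p_{2,\ldots,t}(X))$), the chain $\dim p_{2,\ldots,t}(A)\le K_2(s)\le K_2(r-\rho_1)=\sum_{k\ge2}m_k\rho_k$ yields the direct sum, and equality throughout forces $s=r-\rho_1$ and hence $\rho_1=s_1$, giving optimality of both factors. The genuine gap is the proof of $s_1+s\le r$ itself.

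The generic-pencil step does not work over finite fields, and the difficulty is not confined to $\F_2$. The set of $\lambda$ with $\rk(p_1(X+\lambda Y))<s_1$ is a proper Zariski-closed subset of the affine line (it misses $\lambda=0$), but over a small field it can contain every nonzero element: for instance over $\F_3$ with $p_1(X)=I_3$ and $p_1(Y)=\diag(2,1,0)$ one has $\rk(I_3+\lambda\,\diag(2,1,0))=2$ for both $\lambda=1,2$. The first-block condition then pins you to $\lambda=0$, where the tail condition need not hold; there is no field-size-independent bound ensuring a common good $\lambda$. Your $\F_2$ sketch is likewise not a proof: augmenting $A$ by some $(0,W)$ with $W\in p_{2,\ldots,t}(A)$ produces elements $(a_1,a'+\lambda W)$ whose sum-rank you can only bound by $s_1+s$, which is $>r$ under your standing hypothesis, so the enlarged space need not be an $r$-anticode and no contradiction with maximality follows.

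The paper avoids the inequality $s_1+s\le r$ altogether. After extracting the same line-covering data you describe (its Claims~A--D: an RREF basis $\mB$ whose initial entries fill $r_i$ full horizontal lines in each $\mP^i$), it proves directly that every basis element $B\in\mB^1$ has zero tail. The device (Claim~E) is to write down explicit combinations $Z$ and $T=Z+B^1_{1,1}$ in $A$ whose upper $r_\nu\times m_\nu$ parts in every block already have sum-rank exactly $r$; this forces the row spaces of the lower parts to lie inside those of the upper parts, and comparing one entry of $Z$ with the same entry of $T$ isolates a single tail entry of $B^1_{1,1}$ as zero. Symmetry under row and column permutations then kills the entire tail. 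The hypothesis $m_1\ge2$ enters precisely to make $B^1_{1,2}$ available in the case $r_1=1$, and the construction is uniform over every field, including $\F_2$.
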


\begin{proof}
Let $r=\maxsrk(A)$, i.e.~$A$ is an $r$-anticode.
For the first part of the proof (through Claim~\ref{cl4}) we will identify~$\Pi$ with the space of block diagonal matrices 
in $\F^{N\times M}[\mP]$ as in~\eqref{e-Psi}.
This will allow us to make use of \cref{T-M}.
Recall that the sum-rank in~$\Pi$ equals the rank in $\F^{N\times M}[\mP]$.
We also need the $\F$-isomorphism
\[
    \varphi: \F^{N \times M} \longrightarrow \F^{NM},\quad (z_{i,j}) \longmapsto (z_{1,1},...,z_{1,M},...,z_{N,1},...,z_{N,M}),
\]
which simply lists the entries of the matrix $(z_{i,j})$ according to the lexicographic order.
In particular, $\varphi(z)_{(i-1)M+j}=z_{i,j}$.

Let $k:=\dim(A)$ and $\tilde{\mB}=\{\tilde{B}_1,\ldots\tilde{B}_k\}$ be a basis of $A$.
Consider the matrix in $\F^{k\times NM}$ with rows
$\varphi(\tilde{B}_1),\ldots,$ $\varphi(\tilde{B}_k)$ and let
\[
   Z=\begin{pmatrix}Z_1\\ \vdots\\ Z_k\end{pmatrix} \in\F^{k\times NM}
\]
be its reduced row echelon form. Set
\begin{equation}\label{e-Bj}
         B_j:=\varphi^{-1}(Z_j)\ \text{ for }\ j\in[k].
\end{equation}
Then $B_j\in\F^{N\times M}[\mP]$ for all $j$ and $\mB=\{B_1,\ldots,B_k\}$ is  a basis of~$A$.
For each $i \in[t]$ set $\mB^i=\{ B \in \mB : \ini(B) \in \mP^i \}$.
Clearly $\mB$ is the disjoint union of $\mB^1,...,\mB^t$.
We now prove a series of properties.

\begin{claim} \label{cl1}
$|\ini(\mB)| = k$, i.e., the positions of the initial entries of the matrices in $\mB$ are distinct.
\end{claim}

\begin{clproof}
This follows from the definition of the map~$\varphi$, which guarantees that the $j$th pivot index of~$Z$ (i.e., the index of the first nonzero entry of $Z_j$) corresponds to $\ini(B_j)$.
\end{clproof}

Another immediate consequence of the reduced row echelon form is the following.

\begin{claim} \label{cl2}
For each $B\in\mB$ and $(a,b)\in\ini(\mB)\setminus\ini(B)$ we have $B_{a,b}=0$.
\end{claim}

\begin{claim} \label{cl3}
There exist $r_1,\ldots,r_t\in\N_0$ such that 
 \begin{equation}\label{e-ri}
    0\leq r_i\leq n_i,\quad \sum_{i=1}^{t} r_i = r,\quad \sum_{i=1}^{t} r_im_i = k,\quad  \ini(\mB^i) \text{ is a union of $r_i$ lines in $\mP^i$}.
 \end{equation}
\end{claim}

\begin{clproof}
Consider $\rho(\mB)$ from \cref{D-iniB}.
Thanks to Theorem~\ref{T-M} we have $\rho(\mB) \leq r$ since the rank of any element of $A$ is upper bounded by~$r$.
In order to show equality, let $S\subseteq L(\mP)$ be a set of lines in~$\mP$ of minimum size such that $\ini(\mB)\subseteq \cup_{\ell\in S}\ell$.
Let~$r_i$ be the number of lines in $S$ that are in~$\mP^i$. Thus $\rho(\mB)=|S|=\sum_{i=1}^t r_i$ and $0\leq r_i\leq n_i$.
Since each line in~$\mP^i$ contains at most~$m_i$ points, we conclude $\sum_{i=1}^t r_im_i\geq k$.
Furthermore, we know already that $\sum_{i=1}^t r_i\leq r$, and therefore the optimality of~$A$ and \cref{T-OptAnti} tell us that $\sum_{i=1}^t r_im_i=k$ and $\sum_{i=1}^t r_i=r$.
Hence $\rho(\mB)=r$.
Now \eqref{e-ri} follows from Claim~\ref{cl1}.
\end{clproof}

\begin{claim} \label{cl4}
Without loss of generality, we may assume that the $r_i$ lines in~$\mP^i$ are all horizontal for all $1 \le i \le t$,
and that they are the topmost lines in every block.
\end{claim}

\begin{clproof}
From~\eqref{e-ri} along with $\ini(\mB)=k=\sum_{i=1}^t r_im_i$ we conclude that the~$r_i$ lines in~$\mP^i$ covering $\ini(\mB^i)$ must be disjoint.
Moreover, if $n_i<m_i$ they must be horizontal, hence parallel.
If $n_i=m_i$, they are either all vertical or all horizontal.
Since the assumption and conclusion of the theorem are invariant under row permutations and column permutations of the matrices in a given block,
we may assume without loss of generality that these lines are the topmost lines or the leftmost lines (if they are vertical).
In addition, the theorem is invariant under transposition of individual blocks and thus we may assume that all lines are horizontal. 
Note that all the described operations do not change the numbers $r_1,...,r_t$.
\end{clproof}

For the rest of the proof we return to presenting the elements of the anticode~$A$ as matrix tuples in~$\Pi$ (which saves us from displaying large block diagonal matrices).
In particular, we identify~$B_j$~in~\eqref{e-Bj} with its matrix tuple.
We can now describe these basis elements more precisely.

In each $\mP^i$ the union of the $r_i$ horizontal lines from~\eqref{e-ri} form a rectangle $L_i$ of size $r_i\times m_i$.
Hence $L_i=\ini(\mB^i)$.
Under the map~$\varphi$ the union $\bigcup_{i=1}^t L_i$ corresponds to the set of pivot indices of the matrix~$Z$.

It will be convenient to use a different indexing for the matrices in the basis~$\mB=\{B_1,\ldots,B_k\}$.
For $\ell\in\{1,\ldots,t\}$, $i\in\{1,\ldots,r_\ell\}$ and $j\in\{1,\ldots,m_\ell\}$, let
$B^\ell_{i,j}:=B_{\lambda}$, where $\lambda=\sum_{x=1}^{\ell-1}r_xm_x+m_\ell(i-1)+j$.
This simply amounts to, see also Claims~\ref{cl2} and~\ref{cl4}, 
\begin{equation}\label{e-Bellij}
 B^\ell_{i,j}=\left(0,\ldots,0,\begin{pmatrix}\;E^\ell_{i,j}\;\\[.4ex]\hline *\end{pmatrix},
         \begin{pmatrix}\;0_{r_{\ell+1}\times m_{\ell+1}}\;\\[.4ex]\hline *\end{pmatrix},\ldots,
         \begin{pmatrix}\;0_{r_t\times m_t}\;\\[.4ex]\hline *\end{pmatrix}\right),
\end{equation}
where the first nonzero block is at position~$\ell$ and $E^\ell_{i,j}\in\F^{r_\ell\times m_\ell}$ is the standard basis matrix with entry~$1$ 
at position $(i,j)$.
Clearly, $B^\ell_{i,j}$ does not exist if $r_\ell=0$, in which case no matrix tuple in~$A$ starts in the $\ell$-th block.
We do not need to specify the lower parts of the matrix tuples in~\eqref{e-Bellij}.
In Claim~\ref{cl5} however, we will see that for $\ell=1$ the second to last block are zero.
By construction,
\begin{equation}\label{e-mBell}
   \mB^\ell=\subspace{B^\ell_{i,j}\mid 1\leq i\leq r_\ell,\,1\leq j\leq m_\ell}.
\end{equation}

\begin{claim} \label{cl5}
Every $B\in\mB^{(1)}$ is of the form $B=(X_1,0,\ldots,0)$.
As a consequence,  $A=p_1(A) \oplus p_{2,...,t}(A)$, where~$p_1$ and $p_{2,...,t}$ are as in~\eqref{e-Proj}.
\end{claim}

\begin{clproof}
The result is clear if $r_1=0$. We henceforth assume $r_1 \ge 1$ and prove that
every $B\in\mB^1$ is of the form $B=(X_1,0,\ldots,0)$.
The second part of the claim then follows because 
$A \subseteq p_1(A) \oplus p_{2,...,t}(A)$ and~$A$ is generated by the
matrices in $\mB=\mB^1\cup\bigcup_{i=2}^t\mB^i$.

Thanks to~\eqref{e-mBell} it suffices to show the statement for the basis matrices $B^1_{i,j}$.
Since the claim is invariant under permutation of the rows $1,...,r_1$ in block~1 and
permutation of the columns in block~1, it suffices to proves the claim for $B^1_{1,1}$.
Write $B^1_{1,1}=(X_1,\ldots,X_t)$ and fix $\ell\geq 2$.
We show that $X_\ell=0$.
Thanks to~\eqref{e-Bellij} it remains to show that the last $n_\ell-r_\ell$ rows of~$X_\ell$ are zero.
If $r_\ell=n_\ell$, we are done. Thus suppose $r_\ell<n_\ell$, which implies $r_\ell<m_\ell$ as well.

The remainder of our argument is inspired by the proof of~\cite[Claim 2]{Mes85}.
Note that our claim is also invariant under permutation of rows $r_\ell+1,...,n_\ell$ in block~$\ell$ and permutation
of the columns in block~$\ell$, and ttherefore it suffices to show that $(X_\ell)_{r_\ell+1,r_\ell+1}=0$.

For any $\nu,i,j$ we define $e^\nu_{i,j}$ to be the entry at position $(r_\ell+1,r_\ell+1)$ in the $\ell$-th block of~$B^{\nu}_{i,j}$.
Precisely, if $B^\nu_{i,j}=(M_1,\ldots,M_t)$, then $e^{\nu}_{i,j}=(M_\ell)_{(r_\ell+1,r_\ell+1)}$. We distinguish two cases.

\noindent\underline{Case I}: $r_1 \ge 2$. 
Define 
\[
  Z:=B^1_{2,1}+B^1_{1,2}+\sum_{x=3}^{r_1}B^1_{x,x}+\sum_{\nu=2}^t\sum_{x=1}^{r_\nu}B^\nu_{x,x}\quad \text{ and }\quad T:=Z+B^1_{1,1}.
\]
Then $Z,\,T\in A$.
We can describe these matrices more explicitly. 
Set $\tilde{I}_{r_\nu}=(I_{r_\nu}\mid 0)\in\F^{r_\nu\times m_\nu}$, $\tilde{J}:=(J\mid 0)\in\F^{r_1\times m_1}$ and
$\hat{J}:=(J'\mid 0)\in\F^{r_1\times m_1}$, where
\[
    J=\begin{pmatrix}0\!&\!1\!&\! \!&\! \!&\! \\1\!&\!0\!&\! \!&\! \!&\!\\ \!&\! \!&\!1\!&\! \!&\!\\ \!&\!\!&\!\!&\!\ddots\!&\! \\ \!&\! \!&\! \!&\! \!&\!1\end{pmatrix},\quad 
    J'=\begin{pmatrix}1\!&\!1\!&\! \!&\! \!&\! \\1\!&\!0\!&\! \!&\! \!&\!\\ \!&\! \!&\!1\!&\! \!&\!\\ \!&\!\!&\!\!&\!\ddots\!&\! \\ \!&\! \!&\! \!&\! \!&\!1\end{pmatrix}
    \in\F^{r_1\times r_1}.
\]
Then~$Z$ and~$T$ are of the form
\[
   Z=\left(\begin{pmatrix}\tilde{J} \\ \hline Z^1_2\end{pmatrix},\,\begin{pmatrix}\tilde{I}_{r_2} \\ \hline Z^2_2 \end{pmatrix},\ldots,\,\begin{pmatrix}\tilde{I}_{r_t} \\ \hline Z^t_2 \end{pmatrix}\right)
   \quad \text{ and } \quad
   T=\left(\begin{pmatrix}J' \\ \hline T^1_2\end{pmatrix},\,\begin{pmatrix}\tilde{I}_{r_2} \\ \hline T^2_2 \end{pmatrix},\ldots,\,\begin{pmatrix}\tilde{I}_{r_t} \\ \hline T^t_2\end{pmatrix}\right)
\]
for some matrices $Z^{\nu}_2$ and $T^{\nu}_2$ of fitting sizes.
Since the sum-rank of the upper blocks of both~$Z$ and~$T$ equals $\sum_{\nu=1}^t r_{\nu}=r=\maxrk(A)$,  the lower blocks cannot 
contribute anything to the sum rank of the matrix tuples.
In particular, the row spaces of these matrices satisfy
$\rowsp(Z^\nu_2)\subseteq\rowsp(\tilde{I}_{r_\nu})$ and $\rowsp(T^\nu_2)\subseteq\rowsp(\tilde{I}_{r_\nu})$ for all $\nu\geq2$.
This implies in particular that  $(Z^\ell_2)_{1,r_\ell+1}=0=(T^\ell_2)_{1,r_\ell+1}$.
But these entries are given by 
\[
     \alpha:=e^1_{1,2}+e^1_{2,1}+\sum_{x=3}^{r_1}e^1_{x,x}+\sum_{\nu=2}^t\sum_{x=1}^{r_\nu}e^\nu_{x,x}\ \text{ and }\ \alpha+e^1_{1,1},
\]
respectively.
This leads to $e^1_{1,1}=0$, as desired.

\noindent\underline{Case II}: $r_1 =1$. \  
Since by assumption $m_1\geq2$ we may consider the matrix tuples
\[
  Z:=B^1_{1,2}+\sum_{\nu=2}^t\sum_{x=1}^{r_\nu}B^\nu_{x,x}\ \text{ and }\ T:=Z+B^1_{1,1}.
\]
Set again $\tilde{I}_{r_\nu}=(I_{r_\nu}\mid 0)\in\F^{r_\nu\times m_\nu}$.
Then~$Z$ and~$T$ are of the form
\[
   Z=\left(\begin{pmatrix}0\,1\,0\ldots0\\ \hline Z^1_2\end{pmatrix},\,\begin{pmatrix}\tilde{I}_{r_2} \\ \hline Z^2_2\end{pmatrix},\ldots,\,\begin{pmatrix}\tilde{I}_{r_t} \\ \hline Z^t_2\end{pmatrix}\right)
   \ \text{ and }\
   T=\left(\begin{pmatrix} 1\,1\,0\ldots0\\ \hline T^2_2\end{pmatrix},\,\begin{pmatrix}\tilde{I}_{r_2} \\ \hline T^2_2 \end{pmatrix},\ldots,\,\begin{pmatrix}\tilde{I}_{r_t} \\ \hline T^t_2\end{pmatrix}\right).
\]
As in the previous case, the sum-rank of the upper blocks of both~$Z$ and~$T$ equals $\sum_{\nu=1}^t r_{\nu}=r=\maxrk(A)$. Thus
$\rowsp(Z^\nu_2)\subseteq\rowsp(\tilde{I}_{r_\nu})$ and $\rowsp(T^\nu_2)\subseteq\rowsp(\tilde{I}_{r_\nu})$ for all $\nu\geq2$.
This implies  $(Z^\ell_2)_{1,r_\ell+1}=0=(T^\ell_2)_{1,r_\ell+1}$.
These entries are
\[
     \alpha:=e^1_{1,2}+\sum_{\nu=2}^t\sum_{x=1}^{r_\nu}e^\nu_{x,x}\ \text{ and }\ \alpha+e^1_{1,1},
\]
respectively, and we arrive again at $e^1_{1,1}=0$.
\end{clproof}

\begin{claim} \label{cl6}
The spaces $p_1(A)$ and $p_{2,...,t}(A)$ in Claim~\ref{cl5} are optimal anticodes
in $\F^{n_1 \times m_1}$ and $\mediumoplus_{i=2}^t\F^{n_i \times m_i}$
respectively.
\end{claim}

\begin{clproof}
By the previous claim $A = p_1(A) \oplus p_{2,...,t}(A)$.
Thus thanks to~\eqref{e-ri} and~\eqref{e-Bellij} we have
$r_1+ \cdots +r_t=\maxsrk(A)=\maxrk(p_1(A)) + \maxsrk(p_{2,...,t}(A))$ as well as $\maxrk(p_1(A)) \ge r_1$ and $\maxsrk(p_{2,...,t}(A)) \ge r_2 + \cdots +r_t$.
Hence we  have  equality for both projections.
Moreover, since $p_{2,\ldots,t}(\mB^1)=0$ by Claim~E, we have
$p_1(A)=p_1(\subspace{\mB^1})$ and
$p_{2,...,t}(A)=p_{2,\ldots,t}(\subspace{\mB^2 \cup \cdots \cup \mB^t})$ and thus these spaces have dimensions
$r_1m_1$ and $r_2m_2 + \cdots + r_tm_t$, respectively.
All of this together with the fact that~$A$ is an optimal $r$-anticode tells us that $p_1(A)$ and $p_{2,...,t}(A)$ are optimal anticodes.
\end{clproof}

All of this establishes the theorem.
\end{proof}

\begin{proof}[Proof of Theorem~\ref{T-OptAntiPlain}] The result now follows by combining Theorem~\ref{tecn}, induction, 
and Theorem~\ref{T-OptAntiHamm}.
\end{proof}

\begin{remark}
Not every anticode $A_1\oplus\ldots\oplus A_s\times A^{\sf h}$ as specified in Theorem~\ref{T-OptAntiPlain} is an optimal linear anticode.
Take for instance $\Pi=\F^{3\times3}\oplus\F^{2\times2}$ and $r=4$.
Then $A:=\Fcol{3}{3}{U_1}\times\Fcol{2}{2}{U_2}$ is an $r$-anticode whenever $\dim(U_1)+\dim(U_2)=4$, and each factor is an optimal linear anticode in the rank metric.
However, for $\dim(U_1)=2=\dim(U_2)$ the anticode~$A$ has dimension~$10$, whereas $\dim(A)=11$ for
$\dim(U_1)=3$ and $\dim(U_2)=1$.
The latter choice turns~$A$ into an optimal linear anticode by \cref{T-OptAnti}.
\end{remark}

The particular choice of factors in \cref{T-OptAntiPlain} that lead to optimal linear anticodes can be made precise with the aid of \cref{L-Maximize}.
This leads to the following complete classification of optimal linear anticodes.
Define 
\[
   \mA_r(\Pi)=\{A\leq \Pi\mid A \text{ is an optimal linear $r$-anticode}\}.
\]

\begin{corollary}[\textbf{Classification of Optimal linear Anticodes}]\label{C-OptAnti}
Let $0\leq r< N$ and let $j\in[t]$ and $0\leq\delta<n_j$ be the unique integers such that
$r=\sum_{i=1}^{j-1}n_i+\delta$.
\begin{enumerate}
\item Suppose $m_j>1$. Let
        $m_1\geq\ldots\geq m_{\ell-1}> m_{\ell}=\ldots=m_j=\!\ldots\!=m_{\ell'}>m_{\ell'+1}\geq\ldots\geq m_t$.
        Furthermore, let $j\leq s\leq t$ be such that $m_i=1$ iff $i>s$. 
        Then
        \[
           \mA_r(\Pi)=\bigg\{\bigoplus_{i=1}^{\ell-1}\F^{n_i\times m_i}\oplus\bigoplus_{i=\ell}^{\ell'} A_i\bigg|
            \begin{array}{l} 
                A_i\leq \F^{n_i\times m_i}\text{ is an optimal $r_i$-anticode for some} \\[.7ex]     
                0\leq r_i\leq n_i\text{ such that } \sum_{i=\ell}^{\ell'}r_i=\sum_{i=\ell}^{j-1}n_i+\delta  
              \end{array}\!\!\bigg\}.
         \]
         \
         \\[-2.5ex]
\item Suppose $m_j=1$ and let $m_1\geq\!\ldots\!\geq m_{\ell-1}> m_{\ell}=\!\ldots\!=m_j=\!\ldots\!= m_t=1$.
        Then
        \[
           \mA_r(\Pi)=\bigg\{\bigoplus_{i=1}^{\ell-1}\F^{n_i\times m_i}\oplus A^{\sf h}\,\bigg|\,
               A^{\sf h}\leq\F^{t-\ell+1}\text{ is an optimal $(j-\ell)$-anticode}  
               \bigg\}.
         \]
\end{enumerate}
\end{corollary}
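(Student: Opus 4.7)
The plan is to combine the structural decomposition from \cref{T-OptAntiPlain} with the numerical classification of maximizers in \cref{L-Maximize}. I start with an arbitrary optimal linear $r$-anticode $A\le\Pi$. Applying \cref{T-OptAntiPlain} yields
\[
A=A_1\oplus\cdots\oplus A_s\oplus A^{\sf h},
\]
where each $A_i\le\F^{n_i\times m_i}$ is an optimal rank-metric $r_i$-anticode with $r_i=\maxrk(A_i)$ and $\dim(A_i)=m_ir_i$ (by \cref{T-Meshulam}), and $A^{\sf h}\le\F^{t-s}$ is an optimal Hamming $r^{\sf h}$-anticode with $\dim(A^{\sf h})=r^{\sf h}$ (by \cref{T-OptAnti} specialized to $n_i=m_i=1$). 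Optimality of $A$ forces $\sum_{i=1}^s r_i+r^{\sf h}=r$ and $\sum_{i=1}^s m_ir_i+r^{\sf h}=K$, where $K$ is the maximum appearing in \cref{T-OptAnti}.

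Next, I extend $(r_1,\ldots,r_s)$ to a tuple $(u_1,\ldots,u_t)\in\mU_r$ by choosing any $(u_{s+1},\ldots,u_t)\in\{0,1\}^{t-s}$ with $\sum_{i>s}u_i=r^{\sf h}$ (possible since $r^{\sf h}\le t-s$). Because $m_i=1$ for $i>s$, such a tuple satisfies $\sum_{i=1}^t m_iu_i=K$ and is therefore a maximizer, so by \cref{L-Maximize}(b) it obeys conditions~(i), (ii), (iii). In case~(a), where $m_j>1$ forces $\ell'\le s$, condition~(iii) requires $u_i=0$ for every $i>\ell'$; applied to the Hamming indices $i>s$, this forces $r^{\sf h}=0$ (otherwise a chosen admissible extension would violate~(iii)), so $A^{\sf h}=\{0\}$ and $A_i=\{0\}$ for $\ell'<i\le s$. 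Condition~(i) then identifies $A_i=\F^{n_i\times m_i}$ for $i<\ell$, while~(ii) gives the sum constraint $\sum_{i=\ell}^{\ell'}r_i=\sum_{i=\ell}^{j-1}n_i+\delta$. In case~(b), where $m_j=1$ forces $\ell=s+1$ and $\ell'=t$, condition~(i) pins $A_i=\F^{n_i\times m_i}$ for every $i\le s$, while~(ii) identifies $r^{\sf h}=j-\ell$, so $A^{\sf h}$ is an optimal $(j-\ell)$-anticode in $\F^{t-\ell+1}=\F^{t-s}$. In either case $A$ has the form asserted by the corollary.

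For the converse, given a candidate $A$ of either stated form, I would verify directly that $\maxsrk(A)=r$ and $\dim(A)=K$, using the prescribed constraints on the factors together with the closed formula for $K$ from \cref{L-Maximize}(a); both computations are routine. I do not expect a serious obstacle, since the heavy structural lifting is already done by \cref{T-OptAntiPlain}, and the corollary amounts to reading off conditions~(i)--(iii) of \cref{L-Maximize}(b) inside that decomposition. The one delicate point is to ensure, in case~(b), that the freedom in choosing $(u_{s+1},\ldots,u_t)$ is compatible with condition~(i); this works because $\ell=s+1$ there makes the constraint ``$u_i=n_i$ for $s<i<\ell$'' vacuous, which is precisely what permits $A^{\sf h}$ to be an arbitrary optimal Hamming anticode even when $|\F|=2$ (where \cref{T-OptAntiHamm} does not apply).
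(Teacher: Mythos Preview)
Your proposal is correct and takes essentially the same approach as the paper: apply \cref{T-OptAntiPlain} to decompose $A$, invoke \cref{L-Maximize}(b) to pin down which factors must be full, zero, or constrained, and handle the converse by a direct dimension/max-sum-rank computation via \cref{L-Maximize}(a). Your explicit construction of a maximizing tuple $(u_1,\ldots,u_t)\in\mU_r$ by distributing $r^{\sf h}$ over the Hamming indices is in fact slightly more careful than the paper's somewhat informal application of \cref{L-Maximize}(b) to $(r_1,\ldots,r_s,r^{\sf h})$.
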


\begin{proof}
(a) 
``$\supseteq$'' \; If~$A$ is as in the set on the right hand side, then~$A$ is a $\rho$-anticode for
$\rho=\sum_{i=1}^{\ell-1}n_i+\sum_{i=\ell}^{\ell'}r_i=\sum_{i=1}^{\ell-1}n_i+\sum_{i=\ell}^{j-1}n_i+\delta=r$.
The optimality follows from the fact that $\dim(A)=\sum_{i=1}^{\ell-1}n_im_i+\sum_{i=\ell}^{\ell'}\dim(A_i)=\sum_{i=1}^{\ell-1}n_im_i+\sum_{i=\ell}^{\ell'}r_im_i$,
along with \cref{L-Maximize}.
\\
``$\subseteq$'' \; Let $A\in\mA_r(\Pi)$. By \cref{T-OptAntiPlain}  $A=A_1\oplus\ldots\oplus A_s\oplus A^{\sf h}$, where~$A_i$ is an optimal $r_i$-anticode in 
$\F^{n_i\times m_i}$ for some $0\leq r_i\leq n_i$ and $A^{\sf h}$ is an optimal $r_{s+1}$-anticode in $\F^{t-s}$ for some 
$0\leq r_{s+1}\leq t-s$.
Then $\sum_{i=1}^{s+1} r_i=r$ and $\dim(A)=\sum_{i=1}^{s} r_im _i+r_{s+1}$.
Since~$A$ is an optimal $r$-anticode, \cref{T-OptAnti}(a) and \cref{L-Maximize}(b) tell us that
\[
   r_i=n_i\text{ for }i<\ell,\ r_i=0\text{ for }i>\ell'\text{ and }  \sum_{i=\ell}^{\ell'}r_i=\sum_{i=\ell}^{j-1}n_i+\delta,
\]
as desired. In particular, $A^{\sf h}=\{0\}$.

(b) By assumption $m_i=n_i=1$ for $i\geq \ell$. Hence $r=\sum_{i=1}^{j-1}n_i=\sum_{i=1}^{\ell-1}n_i+j-\ell$.
``$\supseteq$'' A space as in the set on the right hand side is clearly an~$r$-anticode.
Its dimension is $\sum_{i=1}^{\ell-1}n_im_i+j-\ell$. Since $j-\ell=\sum_{i=\ell}^{j-1}n_i$, \cref{L-Maximize} implies optimality of the anticode.
``$\subseteq$'' Let $A\in\mA_r(\Pi)$. 
In this case,  \cref{T-OptAntiPlain} implies $A=A_1\oplus\ldots\oplus A_{\ell-1}\times A^{\sf h}$, where~$A_i$ is an optimal $r_i$-anticode in 
$\F^{n_i\times m_i}$ for some $0\leq r_i\leq n_i$ and $A^{\sf h}$ is an optimal $r_{\ell}$-anticode in $\F^{t-\ell+1}$ for some 
$0\leq r_{\ell}\leq t-\ell+1$.
Then $\sum_{i=1}^{\ell} r_i=r$ and $\dim(A)=\sum_{i=1}^{\ell-1} r_im _i+r_{\ell}$. 
Hence \cref{L-Maximize}(b) imply $r_i=n_i$ for $i<\ell$ and $r_\ell=j-\ell$, as desired.
\end{proof}

\section{Nonlinear Sum-Rank Metric Anticodes}
\label{sec:non}

In this section we consider anticodes in the sum-rank metric that are not necessarily linear. 
Since we have to measure the size of such anticodes by their cardinality,  we restrict ourselves to codes over \emph{finite} fields.
Thus from now on let $\F=\F_q$ be a finite field of order~$q$.

In classical coding theory, a strong motivation for studying anticodes comes from the  Code-Anticode Bound for distance-regular graphs. We prove a version of this result for linear spaces endowed with translation-invariant metrics.

\begin{lemma}[\textbf{Code-Anticode Bound}]\label{L:codeanticode}
Let $V$ be a vector space over $\F$ and let $\mbox{dist}: V \times V \to \R$
be a translation-invariant distance function. Let 
$d$ be a positive integer and let $C,A \subseteq V$ be subsets with the following properties: $\mbox{dist}(X,Y) \ge d$ for all distinct $X,Y \in C$ and  $\mbox{dist}(X,Y) \le d-1$ for all $X,Y \in A$.
Then $|C||A| \leq |V|$.
\end{lemma}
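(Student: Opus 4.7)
The plan is to establish the bound via a standard pigeonhole/injectivity argument on the Minkowski sum $C+A=\{X+Y\mid X\in C,\,Y\in A\}\subseteq V$. Once we know the natural map $C\times A\to V,\ (X,Y)\mapsto X+Y$ is injective, we immediately get $|C||A|=|C+A|\leq|V|$, which is the claim.

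To show injectivity, I would suppose $X_1+Y_1=X_2+Y_2$ with $X_1,X_2\in C$ and $Y_1,Y_2\in A$, and derive $X_1-X_2=Y_2-Y_1$. The key input is translation invariance, which I interpret as $\mbox{dist}(U+Z,W+Z)=\mbox{dist}(U,W)$ for all $U,W,Z\in V$. Taking $Z=-X_2$ on the left side and $Z=-Y_1$ on the right side, one obtains
\[
   \mbox{dist}(X_1,X_2)=\mbox{dist}(X_1-X_2,0)=\mbox{dist}(Y_2-Y_1,0)=\mbox{dist}(Y_2,Y_1).
\]
The right side is $\leq d-1$ because $Y_1,Y_2\in A$, so $\mbox{dist}(X_1,X_2)\leq d-1<d$. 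Since any two \emph{distinct} elements of $C$ are at distance $\geq d$, this forces $X_1=X_2$, and then $Y_1=Y_2$ follows from $X_1+Y_1=X_2+Y_2$.

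There is essentially no obstacle: the only subtle point is making sure that "distance function" is understood in a way that makes $\mbox{dist}(U,U)=0$ imply $U=0$ when combined with the $\geq d$ separation property of $C$ (otherwise $X_1\ne X_2$ with $\mbox{dist}(X_1,X_2)=0$ would be compatible with $C$). In fact, for the argument we only need that elements of $C$ at distance $<d$ must be equal, which is exactly the hypothesis on $C$, so no additional axiom on $\mbox{dist}$ is required. The argument therefore works verbatim for the sum-rank metric (and for any translation-invariant metric or pseudo-metric on $V$), and the bound $|C||A|\leq|V|$ follows at once.
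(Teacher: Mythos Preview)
Your proof is correct and is essentially the same argument as the paper's: the paper shows that the translates $X+A$ for $X\in C$ are pairwise disjoint, which is exactly the injectivity of your map $(X,Y)\mapsto X+Y$, and both deduce this disjointness from translation invariance in the same way. The only cosmetic difference is the packaging (disjoint cosets versus an injective Minkowski-sum map).
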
	

\begin{proof}
The lemma is trivial if $|A|=1$ or $|C|=1$. We henceforth assume $|A|, |C| \ge 2$.
For each $X \in V$ define $X+A:=\{ X + X' \mid X' \in A\}$. Then for any choice of distinct $X,Y \in C$ the sets  $X+A$ and $Y+A$ are disjoint.
Indeed,  for any $X',Y' \in A$ we have $X+X' = Y +Y'$  if and only if $Y-X = X'-Y'$.
Translation invariance of the distance implies ${\rm dist}(X,Y)={\rm dist}(0,Y-X)={\rm dist}(0,X'-Y')={\rm dist}(Y',X') \leq d-1$, which contradicts our assumption on $C$.
It follows that $|C||A| = |\bigcup_{X \in \mC} (X+A)|\leq |V|.$	
\end{proof}

A set $C$ as in the previous lemma is called a $d$-code. In the sum-rank metric, this specializes to the following notion.

\begin{definition}
Let $1 \le d \le N$ be an integer. A (\textbf{sum-rank metric}) \textbf{$d$-code} is a non-empty subset $C \subseteq \Pi$ such that
$\srk(X-Y) \ge d$ for all distinct $X,Y \in C$. We say that $C$ is \textbf{linear} if it is an $\F$-linear subspace of $\Pi$. In that case we write $C \le \Pi$. When $|C| \ge 2$, we let the \textbf{sum-rank distance} of $C$ be the integer
$\srk(C):=\min\{\srk(X-Y) \mid X,Y \in C, \, X \neq Y\}$.
\end{definition}

The Code-Anticode Bound is most commonly stated for distance-regular graphs, where 
a code is a subset of the vertices and the (\textbf{geodesic}) \textbf{distance} between a pair of vertices is the minimum length of a path joining them. In that context, the Code-Anticode Bound is a special case of a result of Delsarte on association schemes; 
see~\cite{ahlswede+,delsarte}. 
The graph $\Gamma$ is generally constructed from a finite metric space $(V,\mbox{dist})$ taking as $V$ the set of vertices, declaring $v,w$ adjacent if $\mbox{dist}(v,w)=1$, and checking that for all $v,w \in V$ the geodesic distance between $v$ and $w$ coincides with 
$\mbox{dist}(v,w)$.
We briefly consider this graph for the sum-rank metric.

\begin{proposition}\label{P-Geod}
Define $\Gamma(\Pi)$ to be the graph whose vertices are the distinct elements of $\Pi$ and whose edges are the 
pairs $(X,Y) \in \Pi$ satisfying $\srk(X-Y)=1$. Then the geodesic distance in~$\Gamma(\Pi)$ coincides with the sum-rank distance.
\end{proposition}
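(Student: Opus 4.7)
The plan is to establish both inequalities between the geodesic distance $d_\Gamma$ on $\Gamma(\Pi)$ and the sum-rank distance.

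For the inequality $d_\Gamma(X,Y) \le \srk(X-Y)$, I would exhibit an explicit walk of the right length. Let $Z := X - Y \in \Pi$ with $\srk(Z) = r$, so writing $Z = (Z_1,\ldots,Z_t)$ we have $r = \sum_i \rk(Z_i)$. Over any field, a rank-$r_i$ matrix decomposes as a sum of $r_i$ rank-one matrices (e.g.\ via a rank factorisation $Z_i = P_i Q_i$ with $P_i \in \F^{n_i \times r_i}$ and $Q_i \in \F^{r_i \times m_i}$, one rank-one summand per column/row pair). Concatenating these block-wise yields
\[
   Z = E_1 + E_2 + \cdots + E_r, \qquad E_k \in \Pi, \ \srk(E_k)=1.
\]
Setting $X_0 := X$ and $X_j := X_{j-1} - E_j$ for $j \in [r]$ then produces a sequence with $X_r = X - Z = Y$ and $\srk(X_j - X_{j-1}) = 1$ for every $j$; this is a walk of length $r$ in $\Gamma(\Pi)$, so $d_\Gamma(X,Y) \le r = \srk(X-Y)$.

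For the reverse inequality I would use the triangle inequality for the sum-rank metric, which in turn is a consequence of subadditivity of the rank function applied block by block. Given any walk $X = X_0, X_1, \ldots, X_k = Y$ in $\Gamma(\Pi)$ with $\srk(X_{i-1} - X_i) = 1$ for each $i$, telescoping and the triangle inequality give
\[
   \srk(X-Y) \;=\; \srk\bigg(\sum_{i=1}^{k}(X_{i-1}-X_i)\bigg) \;\le\; \sum_{i=1}^{k} \srk(X_{i-1}-X_i) \;=\; k.
\]
Taking the minimum over all such walks yields $\srk(X-Y) \le d_\Gamma(X,Y)$, and combining the two bounds gives the desired equality. (The degenerate case $X=Y$ is trivial since both sides equal $0$.)

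There is no real obstacle here: the content of the argument is just the two standard facts that the rank is subadditive (giving the triangle inequality, hence one direction) and that any rank-$r_i$ matrix splits as a sum of $r_i$ rank-one matrices over an arbitrary field (giving the explicit path, hence the other direction). The only mild care needed is to ensure that the rank-one decomposition of each block can be performed over $\F$ without any field hypothesis, which is immediate from rank factorisation, so that the construction is valid in the full generality of the paper.
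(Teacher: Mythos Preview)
Your proof is correct and follows essentially the same approach as the paper's: both arguments rest on (i) decomposing each block $X_i-Y_i$ of rank $r_i$ into a sum of $r_i$ rank-one matrices to build a path of length $r$, and (ii) the subadditivity of rank (equivalently, the minimality of $r_i$ in such a decomposition) for the reverse inequality. Your write-up is simply more explicit about the walk construction and the triangle-inequality step than the paper's terse version.
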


\begin{proof}
Let $X=(X_1,\ldots,X_t),\,Y=(Y_1,\ldots,Y_t) \in \Pi$ be such that 
$\rk(X_i-Y_i)=r_i$ and $r=\sum_{i=1}^t r_i$, thus $\srk(X-Y)=r$. 
Then there exist~$r_i$ (and no fewer) matrices $A_{ij}$ of rank 1 satisfying $X_i - Y_i=\sum_{j=1}^{r_i} A_{ij}$. 
It follows that
$X-Y$ can be written as a sum of~$r$ matrices in $\Pi$ of sum-rank 1, and~$r$ is the minimum number of sum-rank-1
matrices needed. 
\end{proof}

It is interesting to note that our version of the Code-Anticode Bound does not rely on the distance regularity of the underlying graph.
Indeed, $\Gamma(\Pi)$ is not distance regular, as the next example shows. 
Recall that a graph $\Gamma$ with geodesic distance $\gamma : \Gamma \times \Gamma \to \N_0$ is \textbf{distance-regular} if 
\[
   \big|\{v\in\Gamma\mid \gamma(v,a)=i,\,\gamma(v,b)=j\}\big| \; \text{ depends only on $i,j$, and $\gamma(a,b)$}.
\]

\begin{example}
Let $\Pi=\F_2^{2 \times 2} \oplus \F_2^{2 \times 2}$ and let $A,B \in \Pi$ be the matrices 
\[
    A = \left(\begin{pmatrix} 1 & 0 \\ 0 & 1 \end{pmatrix}, \begin{pmatrix} 0 & 0 \\ 0 & 0 \end{pmatrix} \right), \quad  
	B = \left( \begin{pmatrix} 1 & 0 \\ 0 & 0 \end{pmatrix}, \begin{pmatrix} 1 & 0 \\ 0 & 0 \end{pmatrix}\right).
\]
Then $\srk(A)=\srk(B)=2$, and one can check that $\big|\{ Z \in \Pi \mid \srk(Z-A)=2, \, \srk(Z)=1\}\big| = 3$, whereas
$\big|\{ Z \in \Pi \mid \srk(Z-B)=2, \, \srk(Z)=1\}\big|=8$.
\end{example} 

The Singleton Bound and the Sphere-Packing Bound derived in~\cite[Theorems 3.2 and~3.6]{us_srk} are special instances of the 
Code-Anticode Bound of Lemma~\ref{L:codeanticode}. 
We first present these bounds. 

\begin{theorem}[\text{\cite[Theorems 3.2 and~3.6]{us_srk}}] \label{twot}
Let $C \subseteq \Pi$ be a code with $|C| \ge 2$ and $\srk(C) = d$.
\begin{enumerate}
\item (\textbf{Singleton Bound}) Let $j$ and $\delta$ be the unique integers such that $d-1=\sum_{i=1}^{j-1}n_i+\delta$ and $0\leq\delta\leq n_j-1$.
Then
\[
    |C|\leq q^{ \sum_{i=j}^t m_in_i- m_j\delta}.
\]
Codes meeting the Singleton Bound are called \textbf{MSRD}.
\item (\textbf{Sphere-Packing Bound}) Let $r=\lfloor(d-1)/2\rfloor$. Then 
\[
       |C| \leq \bigg\lfloor\frac{|\Pi|}{V_r(\Pi)}\bigg\rfloor,
\]
where 
\begin{equation} \label{V}
	V_r(\Pi):=\sum_{s=0}^r  \ \sum_{(s_1,...,s_t)\in\mU_s} \ 
	\prod_{i=1}^t \qbin{n_i}{s_i}{q} \prod_{j=0}^{s_i-1} (q^{m_i} - q^j)
\end{equation}
is the size of any sphere in~$\Pi$ of sum-rank radius~$r$~\cite[Lem. 3.5]{us_srk}. Codes meeting this bound are called \textbf{perfect}.
\end{enumerate}
\end{theorem}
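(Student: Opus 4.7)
The plan is to present both bounds as immediate applications of the Code-Anticode Bound (\cref{L:codeanticode}), using the sum-rank metric on $\Pi$, which is clearly translation invariant. The ambient space has size $|\Pi|=q^{\sum_{i=1}^t n_im_i}$, and a $d$-code $C\subseteq\Pi$ pairs with any $(d-1)$-anticode $A\subseteq\Pi$ via the inequality $|C|\,|A|\leq|\Pi|$. The two bounds correspond to two different choices of $A$: for the Singleton Bound we take~$A$ to be an optimal linear $(d-1)$-anticode, and for the Sphere-Packing Bound we take~$A$ to be the ball of sum-rank radius~$\lfloor(d-1)/2\rfloor$ centered at~$0$.

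For part (a), I would first invoke \cref{T-OptAnti} together with \cref{L-Maximize}(a) to compute the dimension of an optimal linear $(d-1)$-anticode. Writing $d-1=\sum_{i=1}^{j-1}n_i+\delta$ with $0\leq\delta\leq n_j-1$ as in the statement, \cref{L-Maximize}(a) gives $\dim(A)=\sum_{i=1}^{j-1}m_in_i+m_j\delta$. Existence of an anticode attaining this dimension is guaranteed by \cref{C-OptAnti}: concretely, one may take $\bigoplus_{i=1}^{j-1}\F^{n_i\times m_i}\oplus\Fcol{n_j}{m_j}{U}\oplus\{0\}$ for any $\delta$-dimensional subspace $U\leq\F^{n_j}$. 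The Code-Anticode Bound then yields $|C|\leq q^{\sum_{i=1}^t n_im_i-\sum_{i=1}^{j-1}m_in_i-m_j\delta}=q^{\sum_{i=j}^t m_in_i-m_j\delta}$.

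For part (b), let $r=\lfloor(d-1)/2\rfloor$ and set $A=\{X\in\Pi\mid\srk(X)\leq r\}$. For any $X,Y\in A$ the triangle inequality gives $\srk(X-Y)\leq\srk(X)+\srk(Y)\leq 2r\leq d-1$, so $A$ is a $(d-1)$-anticode. Its cardinality is counted by stratifying according to the rank profile $(s_1,\ldots,s_t)\in\mU_s$ for each $0\leq s\leq r$; over $\F_q$, the number of $n_i\times m_i$ matrices of rank~$s_i$ equals $\qbin{n_i}{s_i}{q}\prod_{j=0}^{s_i-1}(q^{m_i}-q^j)$ (choose a row space, then an injection from that space into $\F_q^{m_i}$). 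Multiplying over the blocks and summing over all valid profiles gives exactly the expression $V_r(\Pi)$ in \eqref{V}. The Code-Anticode Bound then yields $|C|\,V_r(\Pi)\leq|\Pi|$, and since $|C|$ is an integer, we may take the floor. The main obstacle here is purely combinatorial bookkeeping for the formula for $V_r(\Pi)$, which is a direct block-wise count and is in any case already established in the cited \cite[Lem.~3.5]{us_srk}; the conceptual content is entirely in the choice of the anticode $A$ and the structural results of \cref{S-OptLinAnti}.
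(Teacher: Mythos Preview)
Your proposal is correct and follows essentially the same route as the paper: both parts are derived from the Code-Anticode Bound (\cref{L:codeanticode}), with the ball of radius $\lfloor(d-1)/2\rfloor$ serving as the anticode for~(b) and an optimal linear $(d-1)$-anticode for~(a). The only cosmetic difference is that for~(a) the paper constructs the anticode as the kernel of the projection $\tau_u$ deleting the last $u_i$ rows from each block (with $u=(n_1,\ldots,n_{j-1},\delta,0,\ldots,0)$), whereas you write the same space directly as $\bigoplus_{i=1}^{j-1}\F^{n_i\times m_i}\oplus\Fcol{n_j}{m_j}{U}\oplus\{0\}$ and appeal to \cref{C-OptAnti}; these are the same object and both arguments hinge on \cref{L-Maximize}(a) to identify the optimal dimension.
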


Clearly, the Sphere-Packing Bound is an instance of the Code-Anticode Bound because the ball of radius $\lfloor (d-1)/2 \rfloor$ 
gives the required $(d-1)$-anticode in Lemma~\ref{L:codeanticode}.  

\begin{proposition}
The Singleton Bound is an instance of the Code-Anticode Bound of Lemma~\ref{L:codeanticode}.
\end{proposition}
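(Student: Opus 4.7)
The plan is to exhibit an explicit $(d-1)$-anticode whose cardinality, when paired with the Code-Anticode Bound, reproduces the Singleton Bound exactly. Given a code $C\subseteq\Pi$ with $\srk(C)=d$ and with $d-1=\sum_{i=1}^{j-1}n_i+\delta$, the natural candidate is an \emph{optimal linear} $(d-1)$-anticode $A\leq\Pi$, whose existence and dimension are supplied by \cref{T-OptAnti} together with \cref{L-Maximize}(a).

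First I would fix any optimal linear $(d-1)$-anticode $A\leq\Pi$; a concrete choice is provided by \cref{C-OptAnti}, but for this proof only the \emph{cardinality} of~$A$ matters. By \cref{T-OptAnti} and \cref{L-Maximize}(a), applied with $r=d-1$, we obtain
\[
     \dim_{\F_q}(A)=\sum_{i=1}^{j-1}m_in_i+m_j\delta,\qquad\text{so}\qquad |A|=q^{\sum_{i=1}^{j-1}m_in_i+m_j\delta}.
\]
(Note that $d\le N$ since $C$ has at least two elements, hence $0\le d-1<N$ and \cref{L-Maximize} applies.)

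Next I would invoke \cref{L:codeanticode} with $V=\Pi$, the translation-invariant distance induced by $\srk$, and the pair $(C,A)$: by assumption $\srk(X-Y)\ge d$ for distinct $X,Y\in C$ and $\srk(X-Y)\le d-1$ for all $X,Y\in A$. The lemma yields $|C|\cdot|A|\le|\Pi|=q^{\sum_{i=1}^t m_in_i}$. Dividing and using the formula for $|A|$ gives
\[
     |C|\le q^{\sum_{i=1}^t m_in_i-\sum_{i=1}^{j-1}m_in_i-m_j\delta}=q^{\sum_{i=j}^t m_in_i-m_j\delta},
\]
which is precisely the Singleton Bound of \cref{twot}(a).

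There is no real obstacle here: the entire argument is a direct translation of the Code-Anticode Bound once the cardinality of an optimal linear anticode is known. The only thing requiring any care is verifying that the exponent arithmetic matches, i.e.\ that the dimension formula from \cref{L-Maximize}(a) is the complementary exponent to the one appearing in the Singleton Bound; this is an immediate calculation from the definitions of $N$, $M$, and the decomposition $d-1=\sum_{i=1}^{j-1}n_i+\delta$.
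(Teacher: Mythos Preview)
Your proof is correct and follows essentially the same approach as the paper: exhibit an optimal linear $(d-1)$-anticode, compute its cardinality via \cref{L-Maximize}(a), and apply \cref{L:codeanticode}. The only cosmetic difference is that the paper constructs the anticode explicitly as the kernel of a row-deletion projection (so existence is immediate and self-contained), whereas you cite \cref{C-OptAnti} for existence; either way one arrives at an anticode of cardinality $q^{\sum_{i=1}^{j-1}m_in_i+m_j\delta}$ and the same arithmetic finishes the argument.
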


\begin{proof}
Recall the notation from~\eqref{e-Pi} and the sets $\mU_r$ from \cref{T-OptAnti}.
For each $u \in \mathcal{U}_{d-1}$, define the projection
\[
      \tau_u : \Pi \longrightarrow \Pi\big((n_1-u_1)\times m_1 ,...,(n_t-u_t)\times m_t\big), \qquad X \longmapsto (X'_1,...,X'_t),
\]
where $X'_i$ is obtained from 
$X_i \in \F^{n_i \times m_i}$ by deleting its last $u_i$ rows. The kernel of this map is a (linear) $(d-1)$-anticode $A(u)$ and so by Lemma \ref{L:codeanticode}, we have $|\mC||A(u)|\leq |\Pi|$ for any code $C \subseteq \Pi$ with $\srk(C)=d$ and any choice of $u \in \mathcal{U}_{d-1}$. The Singleton Bound is obtained by choosing $u$ such that $|A(u)|=q^{\sum_{i=1}^tu_im_i}$ is maximal. 
By \cref{L-Maximize}(a) this is the case for
$u=(n_1,...,n_{j-1},\delta,0,...,0)$ for $j\in [t]$ and $0 \leq \delta <n_j$ as in \cref{twot}(a).  
\end{proof}

Note that the maximal linear $(d-1)$-anticode appearing in the proof of the above result is the linear space 
\[
    \mA(n_1,...,n_{j-1},\delta,0,...,0) = \Big( \mediumoplus_{i=1}^{j-1} \F^{n_i \times m_i} \Big) \oplus \F^{\delta \times m_j}.
\]

The Sphere-Packing Bound is sharper than the Singleton Bound precisely for those parameters for which the size of an optimal linear $(d-1)$-anticode is less than the size of the sphere of radius $\lfloor (d-1)/2 \rfloor$. In the remainder of the paper we compare the sizes of these two anticodes. 
We first show that for sufficiently large field size~$q$ an optimal linear $r$-anticode is larger than the ball of radius $\lfloor r/2 \rfloor$.

\begin{proposition}
Let $1 \le r \le N$ be an integer.
For sufficiently large~$q$, the cardinality of the sum-rank sphere of radius $\lfloor r/2 \rfloor$ in $\Pi$ is exceeded by the size of an optimal linear $r$-anticode. 
\end{proposition}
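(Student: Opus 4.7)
The plan is to compare the leading-order behavior in $q$ of the two quantities. The anticode has size exactly $q^K$ with
\[
   K = \max\bigl\{\textstyle\sum_{i=1}^t m_i u_i : (u_1,\ldots,u_t) \in \mU_r\bigr\},
\]
and I would first use that, in the expression \eqref{V} for $V_{\lfloor r/2 \rfloor}(\Pi)$, each summand $\prod_{i=1}^t \qbin{n_i}{s_i}{q} \prod_{j=0}^{s_i-1}(q^{m_i} - q^j)$ is a polynomial in $q$ with positive leading coefficient of $q$-degree $\sum_i (s_i(n_i - s_i) + s_i m_i) = \sum_i s_i(n_i + m_i - s_i)$. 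Hence $V_{\lfloor r/2 \rfloor}(\Pi)$ is itself a polynomial in $q$ with positive leading coefficient, of degree
\[
   D := \max\bigl\{\textstyle\sum_{i=1}^t s_i(n_i+m_i-s_i) : 0 \le s_i \le n_i,\ \sum_i s_i \le \lfloor r/2 \rfloor\bigr\}.
\]

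The core of the argument is then to prove $D < K$. I would fix an optimizer $(s_1,\ldots,s_t)$ of $D$ and define $u_i := \min(n_i, 2 s_i)$. A coordinate-wise check yields $m_i u_i \ge s_i(n_i + m_i - s_i)$: if $2 s_i \le n_i$, this reduces to $m_i \ge n_i - s_i$, which holds since $n_i \le m_i$; if $2 s_i > n_i$, it reduces to $(n_i - s_i)(m_i - s_i) \ge 0$, which holds since $s_i \le n_i \le m_i$. Summing yields $\sum_i m_i u_i \ge D$. Moreover $\sum_i u_i \le 2 \sum_i s_i \le r$, and since $N \ge r$, I can extend $u$ to some $\tilde u \in \mU_r$ with $\tilde u_i \ge u_i$, giving $K \ge \sum_i m_i \tilde u_i \ge D$.

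To upgrade this to a strict inequality, I would observe that the first coordinate-wise bound is strict whenever $s_i \ge 1$ (since then $m_i \ge n_i > n_i - s_i$), and the extension step is strict whenever $\sum_i u_i < r$. For $r = 1$ we have $D = 0 < K$ directly. For $r \ge 2$ any optimizer of $D$ satisfies $\sum_i s_i \ge 1$, so some $s_i > 0$; a short case analysis shows that if $\sum_i u_i = r$ then necessarily every positive $s_i$ lies in the case $2 s_i \le n_i$ (where the coordinate bound is strict), so at least one of the two inequalities must be strict, yielding $D < K$.

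Once $D < K$ is established, the conclusion is routine: $V_{\lfloor r/2 \rfloor}(\Pi)$ is a polynomial in $q$ of degree $D$ with positive leading coefficient, hence is eventually dominated by $q^K$. The main obstacle is the bookkeeping for the strict inequality $D < K$, where the two-case analysis for $u_i = \min(n_i, 2 s_i)$ must interact cleanly with the extension step $u \mapsto \tilde u \in \mU_r$; the rest is straightforward asymptotics.
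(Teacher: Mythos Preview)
Your proposal is correct and follows essentially the same approach as the paper: both compute the $q$-degree of each summand in $V_{\lfloor r/2\rfloor}(\Pi)$ as $\sum_i s_i(n_i+m_i-s_i)$, set $u_i=\min(n_i,2s_i)$ (the paper calls this $r_i$), establish the coordinate-wise bound $s_i(n_i+m_i-s_i)\le m_i u_i$, and then argue strictness via a short case split. Your split (whether $\sum_i u_i=r$ or $\sum_i u_i<r$) is organized slightly differently from the paper's (whether some $s_i$ lies strictly between $0$ and $n_i$, or all $s_i\in\{0,n_i\}$), but the underlying logic is identical.
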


\begin{proof}
The result is clear if $r=1$. We henceforth assume $r \ge 2$ and denote by~$K$ the dimension of an optimal linear 
$r$-anticode in $\Pi$. 
Lemma \ref{L-Maximize} tells us that
\begin{equation}\label{e-MaxDim}
    K=\max\!\big\{\sum_{i=1}^{t} u_i m_i\,\big|\, u\in\mU_r\big\}\ \text{ and }\ K>
    \sum_{i=1}^{t} u_i m_i \text{ for all }u\in \bigcup_{a=0}^{r-1}\mU_a.
\end{equation}
Let us now turn to the ball. 
We can write the size of the ball of sum-rank radius $\rho:=\lfloor r/2 \rfloor$ in \eqref{V} as
\[
   V_{\rho}(\Pi)
     =\sum_{s=0}^{\rho}  \ \sum_{(s_1,...,s_t)\in\mU_s} S(s_1,...,s_t),
\]
where $S(s_1,...,s_t)=\prod_{i=1}^t \Gaussian{n_i}{s_i}_q\prod_{j=0}^{s_i-1} (q^{m_i} - q^j)$, which is the cardinality of the set 
of tuples $(X_1,\ldots,X_t)$ in $\Pi$ satisfying $\rk(X_i) = s_i$ for each $i \in [t]$.
It is straightforward to check that $S(s_1,...,s_t) \sim q^{\sum_{i=1}^{t} s_i(m_i+n_i-s_i)}$ as $q \to\infty$.
Therefore, $V_{\rho}(\Pi) \in \mathcal{O}\left(q^{\ell(\rho)}\right)$ in Bachmann-Landau notation, where 
\[
   \ell(\rho)  = \max \bigg\{\sum_{i=1}^t s_i(m_i+n_i-s_i)  \, \bigg| \,  (s_1,...,s_t) \in {\mathcal U}_s, \, 0 \leq s \leq  \rho \bigg\}.
\]
Since $r\geq2$ by assumption, the maximum is clearly attained for some $s\in\{1,\ldots,\rho\}$.
Fix now  any $s\in\{1,\ldots,\rho\}$ and $(s_1,...,s_t) \in \mU_s$.
Set $r_i=\min\{n_i,2s_i\}$ for $i\in[t]$. Then 
\begin{equation}\label{e-Rsum}
   \sum_{i=1}^t r_i\leq\sum_{i=1}^t 2s_i\leq 2s\leq r.
\end{equation}
Hence $(r_1,\ldots,r_t)\in\mU_a$ for some $a\leq r$. Furthermore we have
\begin{equation}\label{e-smn}
     \left.\begin{array}{ccccl}
     s_i(m_i+n_i-s_i)&\leq& n_im_i& \text{ and }& [s_i(m_i+n_i-s_i)= n_im_i\Longleftrightarrow s_i=n_i],\\[.5ex]
     s_i(m_i+n_i-s_i)&\leq &2s_im_i&\text{ and }&[s_i(m_i+n_i-s_i)= 2s_im_i\Longleftrightarrow s_i=0].
     \end{array}\quad\right\}
\end{equation}
The first inequality follows from maximizing the function $f(x)=x(m_i+n_i-x)$ 
on the interval $[0,n_i]$, while recalling $m_i\geq n_i$, the second inequality is clear.
Note that since $\sum_{i=1}^t s_i=s\leq\rho<N$, there is at least one~$i$ such that $s_i<n_i$. 
\\
\underline{Case 1:} Suppose $0<s_{\lambda}<n_{\lambda}$ for at least one~$\lambda$. 
Then  $s_{\lambda}(m_{\lambda}+n_{\lambda}-s_{\lambda})<r_{\lambda}m_{\lambda}$, and
\eqref{e-MaxDim} --~\eqref{e-smn} imply
\[
   \sum_{i=1}^ts_i(m_i+n_i-s_i)<\sum_{i=1}^t r_im_i\leq K.
\]
\underline{Case 2:} Suppose $s_i\in\{0,n_i\}$ for all $i\in[t]$. Set $\mI=\{i\mid s_i=n_i\}$. Then $\sum_{i\in\mI}n_i=s\leq\rho$ and
\[
     \sum_{i=1}^ts_i(m_i+n_i-s_i)=\sum_{i\in\mI}n_im_i< K
\]
thanks to~\eqref{e-MaxDim}.
Hence we showed that in either case the ball is strictly smaller than the optimal linear anticode.
\end{proof}

The previous result implies that for sufficiently large~$q$, the Code-Anticode Bound is sharper when the chosen anticode is an optimal linear anticode, rather than the sum-rank ball of the appropriate radius.  For small values of $q$, this does not necessarily hold, as the following example shows.

\begin{example}\begin{enumerate}
\item Let $\Pi =\mediumoplus_{i=1}^3 \F_2^{2 \times 2}$. Then 
	$V_2(\Pi) = 289$, while the size of an optimal linear $4$-anticode in $\Pi$
	is 256.
\item Now let $\Pi = \mediumoplus_{i=1}^3\F_3^{2 \times 2}$. Then $V_2(\Pi) = 3313$, while
	the size of an optimal linear $4$-anticode in $\Pi$ is 6561.
\item Finally, let $\Pi = \mediumoplus_{i=1}^4\F_2^{2 \times 2}$. 
	Then $V_2(\Pi)=547$, while the size of an optimal linear $4$-anticode in $\Pi$ is 256. 
\end{enumerate}
\end{example}

The Diametric Theorem of Ahlswede et al.~\cite{ahlswedekhachatrian} is reformulated in \cite[Theorem AK]{ahlswede+} 
in order to show that the size of an optimal $r$-anticode in Hamming space equals the cardinality of the Cartesian product 
of the Hamming ball of radius $r$ in $\F_q^{n-\ell+2r}$ and the optimal linear $(\ell-2r)$-anticode $\oplus_{j=1}^{\ell-2r} \F_q$ 
for a suitable~$\ell$.
For the sum-rank metric the following example suggests a similar behavior: the direct product of a suitable ball and full matrix spaces leads to a nonlinear anticode with larger cardinality than both the ball and the optimal linear anticode.  

\begin{example}
Let $\F=\F_3$ and $\Pi=\mediumoplus_{i=1}^7\F^{2 \times 2}$ and $r=8$. 
Denote by $B_s(k)$ the ball of radius~$s$ in $\mediumoplus_{i=1}^k\F^{2 \times 2}$. 
We have the following $8$-anticodes, with $A_0$ being the optimal linear $8$-anticode and $A_4$ being the ball of radius~$4$:
\[
   \begin{array}{ll}
   A_0=\mediumoplus_{i=1}^4\F^{2 \times 2},\ & |A_0|=43,046,721,\\[1ex]
   A_1=B_1(4)\oplus\mediumoplus_{i=1}^3\F^{2 \times 2},\qquad\qquad & |A_1|=68,555,889,\\[1ex]
    A_2=B_2(5)\oplus\mediumoplus_{i=1}^2\F^{2 \times 2},\ & |A_2|=69,815,602, \\[1ex]
   A_3=B_3(6)\oplus\F^{2 \times 2}, & |A_3|=58,099,761,\\[1ex]
    A_4=B_4(7), &  |A_4|=43,142,961.     \end{array}
\]
Thus the ``hybrid'' anticodes $A_1,\,A_2,\,A_3$ are strictly larger than the optimal linear anticode and the ball of radius~$4$, and~$A_2$ is largest among those considered.
We also note that over the field~$\F_2$, the ball $A_4$ is largest,  over $\F_4$ the anticode $A_1$ is largest, and over~$\F_5$ 
the linear anticode $A_0$ is largest.
\end{example}

The explicit description of optimal (nonlinear) anticodes in the sum-rank metric appears to be an interesting open problem. We leave this to future research.

\bigskip

\bigskip

\bibliographystyle{abbrv}

\end{document}